\definecolor{airforceblue}{rgb}{0.36,0.54,0.66}
\definecolor{auburn}{rgb}{0.43,0.21,0.1}
\newcommand{\A}{\mathbb{A}}
\newcommand{\bA}{\boldsymbol{A}}
\newcommand{\bd}{\boldsymbol{d}}
\newcommand{\be}{\boldsymbol{e}}
\newcommand{\bg}{\boldsymbol{g}}
\newcommand{\bN}{\boldsymbol{N}}
\newcommand{\bq}{\boldsymbol{q}}
\newcommand{\bT}{\boldsymbol{T}}
\newcommand{\btau}{\boldsymbol{\tau}}
\newcommand{\bv}{\boldsymbol{v}}
\newcommand{\bV}{\boldsymbol{V}}
\newcommand{\bw}{\boldsymbol{w}}
\newcommand{\bX}{\boldsymbol{X}}
\newcommand{\bXi}{\boldsymbol{\Xi}}
\newcommand{\bza}{\boldsymbol{\alpha}}
\newcommand{\bzb}{\boldsymbol{\beta}}
\newcommand{\bzero}{\boldsymbol{0}}
\newcommand{\bzs}{\boldsymbol{\sigma}}
\newcommand{\C}{\mathbb{C}}
\newcommand{\CC}{\mathcal{C}}
\newcommand{\ClD}{\mathcal{D}}
\newcommand{\CF}{\mathcal{F}}
\newcommand{\CQ}{\mathcal{Q}}
\newcommand{\CR}{\mathcal{R}}
\newcommand{\CT}{\mathcal{T}}
\newcommand{\CW}{\mathcal{W}}
\newcommand{\dn}{{\text{\tiny dn}}}
\newcommand{\hf}{\frac{1}{2}}
\newcommand{\hff}{\tfrac{1}{2}}
\newcommand{\K}{\mathbb{K}}
\newcommand{\ozg}{\overline{\gamma}}
\newcommand{\R}{\mathbb{R}}
\newcommand{\tcb}{\textcolor{blue}}
\newcommand{\tk}{\tilde{k}}
\newcommand{\tS}{\tilde{S}}
\newcommand{\tw}{\tilde{w}}
\newcommand{\up}{{\text{\tiny up}}}
\newcommand{\za}{\alpha}
\newcommand{\zg}{\gamma}
\newcommand{\zk}{\kappa}
\newcommand{\zl}{\lambda}
\newcommand{\zL}{\Lambda}
\newcommand{\zo}{\omega}
\newcommand{\zO}{\Omega}
\newcommand{\zs}{\sigma}
\newcommand{\zS}{\Sigma}
\newcommand{\zz}{\zeta}
\newtheorem{theorem}{\bf Theorem}
\numberwithin{theorem}{section}
\newenvironment{proof}
{\begin{trivlist}\item[\hskip\labelsep\quad{\bf Proof.}
\hspace{0.5 em}]}{\hfill \rule{0.5em}{0.5em} \end{trivlist}}
\numberwithin{equation}{section}
\numberwithin{figure}{section}
\author{Cornelis van der Mee\tcb{\footnote{Dip. Matematica e Informatica,
Universit\`a di Ca\-glia\-ri, Via Ospedale 72, 09124 Ca\-glia\-ri, Italy. Email:
cornelis110553@gmail.com}}}
\title{Matrix Zakharov-Shabat Systems\\ with Zero Diagonal
Entry\tcb{\footnote{LaTeX compilation date and time: \tcb{\DTMnow}}}}
\begin{document}
\date{}
\maketitle

\begin{abstract}
In this article we develop the direct and inverse scattering theory of the
Ablowitz-Kaup-Newell-Segur (AKNS) system $\bv_x=(ik\zS+\CQ(x))\bv$, where $\zS$
is a diagonal $n\times n$ matrix with diagonal entries $1$ and $-1$ and a single
zero diagonal entry and $\CQ(x)$ is an $n\times n$ potential anticommuting with
$\zS$ with entries in $L^1(\R)$. We derive the time evolution of the scattering
data which, through the inverse scattering transform, lead to the solution of
the initial-value problem for a system of long-wave-short-wave equations.
\end{abstract}

\par\noindent{\bf Keywords} Singular Zakharov-Shabat system, Inverse scattering,
Long-wave-short-wave equation

\par\noindent{\bf Mathematics Subject Classification} 34A55, 34L25

\section{Introduction}\label{sec:1}

The direct and inverse scattering theory of the AKNS system
\begin{equation}\label{1.1}
\bv_x=(ik\zS+\CQ)\bv,
\end{equation}
where $\zS$ is a diagonal matrix of order $n$ with nonzero real diagonal
entries, $\CQ(x)$ is an $n\times n$ matrix potential anticommuting with $\zS$
with its entries belonging to $L^1(\R)$ and $x\in\R$ is position, has been
studied at great length \cite{ZS,AKNS,AC,AS,CD,FT,APT}. Its direct scattering
theory relies primarily on the linear relation between $n$ linearly independent
Jost solutions continuous in $k\in\C^+\cup\R$ and analytic in $k\in\C^+$ and $n$
linearly independent Jost solutions continuous in $k\in\C^-\cup\R$ and analytic
in $k\in\C^-$, where $\zS$ is a real diagonal matrix of order $n$. The
scattering data resulting from the direct scattering theory consist of two sets
of reflection coefficients and as many norming constants as there are discrete
eigenvalues in $\C^+$ or in $\C^-$. Its inverse scattering theory consists of
converting the scattering data in a Marchenko integral equation whose solution
leads to the potential $\CQ(x)$. An alternative way of implementing inverse
scattering consists of converting the scattering data in a Riemann-Hilbert
problem whose solution leads to the potential $\CQ(x)$. The direct and inverse
scattering theory of the AKNS system \eqref{1.1} with nonsingular real diagonal
matrix $\zS$ has been worked out in various textbooks \cite{AC,CD,FT,APT}.
A general theory of developing the direct and inverse scattering for arbitrary
nonsingular complex diagonal matrix $\zS$ can be found in \cite{BC1,BC2,BDT}.

 Few publications are devoted to AKNS systems \eqref{1.1}, where $\zS$ is a real
diagonal matrix with at least one of its diagonal entries vanishing. Among the
very few papers devoted to the subject matter is that by Newell \cite{Nw}, where
$\zS=\text{diag}(1,0,-1)$. In this article the author defines two Jost solutions
analytic in $k\in\C^+$ and two Jost solutions analytic in $k\in\C^-$.
Unfortunately, one would expect three Jost solutions analytic in $k\in\C^+$ and
three Jost solutions analytic in $k\in\C^-$, thus rendering incomplete the
direct scattering theory expounded in \cite{Nw}. In a recent paper \cite{DM24}
we have solved this problem for $n=3$ and $\zS=\text{diag}(1,0,-1)$. In the
present article we shall extend this result to arbitrary $n\ge3$ and
$\zS=\text{diag}(I_{m_+},0,-I_{m_-})$ with $\min(m_+,m_-)\ge1$.

In this article we develop a direct and inverse scattering theory of the AKNS
system \eqref{1.1}, where
$$\zS=\text{diag}(1,\ldots,1,0,-1,\ldots,-1)$$
with $m_+\ge1$ diagonal entries $+1$, one diagonal entry zero, $m_-\ge1$
diagonal entries $-1$, and $m_++m_-=n-1$. To define the $n$-th linearly
independent Jost solution analytic in either $k\in\C^+$ or in $k\in\C^-$,
we take the wedge products of the complex conjugates of the $n-1$ dual Jost
solutions satisfying $\breve{\bv}_x=-\breve{\bv}(ik\zS+\CQ)$, where we adopt
an idea applied before to the Manakov system with $\zS=\text{diag}(1,-1,-1)$
with nonvanishing boundary conditions \cite{PAB}. Similar constructions of
an $n$-th Jost-like solution have been given before by Deift et al. \cite{DTT},
Kaup \cite{Kp2}, and McKean \cite{McK}. Once we have defined $n$ linearly
independent Jost solutions analytic in $k\in\C^+$ written as the columns of
an $n\times n$ matrix and $n$ linearly independent Jost solutions analytic in
$k\in\C^-$ written as the columns of another $n\times n$ matrix, we write the
first $n\times n$ matrix as the other multiplied from the right by
an $n\times n$ matrix depending only on $k\in\R$, where the diagonal entries are
called the transmission coefficients and, apart from minus signs, the
off-diagonal entries are called reflection coefficients. We thus find the
reflection coefficients and, corresponding to each pole of the transmission
coefficients in $\C^+$ or in $\C^-$, the so-called norming constants. The
inverse scattering problem can then be solved alternatively by either the
Marchenko method or the Riemann-Hilbert method. The direct and inverse
scattering theory of the AKNS system \eqref{1.1}, where $\zS$ contains at least
two zero diagonal entries and the other diagonal entries are $\pm1$, remains
open.

The study of one-dimensional wave propagation poses several challenging
mathematical problems \cite{Wh,Ln}. The most elementary mathematical model for
studying the interaction between long waves and short waves is the AKNS system
studied by Newell \cite{Nw}. Similar integrable equations were formulated by
Yan-Chow Ma \cite{Ma}, Yajima and Oikawa \cite{YO}, Ruomeng Li and Xianguo Geng
\cite {Geng3}, and Wang et al. \cite{WCGL}. More recently, Caso-Huerta et al.
\cite{YON,YON2} have studied more general long-wave-short-wave interaction
models. These so-called YON equations can be derived from the zero curvature
condition
\begin{equation}\label{1.2}
\bX_t-\bT_x+\bX\bT-\bT\bX=0_{3\times3},
\end{equation}
where the Lax pair $(\bX,\bT)$ is given by \cite[Eqs.~(2.2), (3.1), and
(3.2)]{YON} as well as by \cite[Eqs.~(7)]{YON2}. The nonlinear wave equation
systems studied in \cite{YON,YON2} have been generalized by Wright
\cite[Eqs.~(52)-(53)]{Wr}.

In the present $n\times n$ situation the Lax pair $(\bX,\bT)$, where
$\bX=ik\zS+\CQ$ with potential $\CQ=\left[\begin{smallmatrix}\bzero&S&iL\\
T^\dagger&0&S^\dagger\\iK&T&\bzero\end{smallmatrix}\right]$ and
$\bT=(ik)^2A+ikB+C$ with
\begin{align*}
A&=\frac{i}{n}\text{diag}(I_{m_+},1-n,I_{m_-}),\\
B&=\begin{bmatrix}\bzero&iS&\bzero\\iT^\dagger&0&-iS^\dagger\\ \bzero&-iT
&\bzero\end{bmatrix},\quad C=\begin{bmatrix}-iST^\dagger&iS_x-LT&iSS^\dagger\\
-T_x^\dagger-S^\dagger K&i[T^\dagger S+S^\dagger T]&-iS_x^\dagger-T^\dagger L\\
iTT^\dagger&iT_x-KS&-iTS^\dagger\end{bmatrix},
\end{align*}
leads to the six coupled long-wave-short-wave equations
\begin{subequations}\label{1.3}
\begin{align}
iL_t-2i(SS^\dagger)_x-2ST^\dagger L+2LTS^\dagger&=0_{m_+\times m_-},
\label{1.3a}\\
iK_t-2i(TT^\dagger)_x-2TS^\dagger K+2KST^\dagger&=0_{m_-\times m_+},
\label{1.3b}\\
T_t-iT_{xx}+K_xS-iKLT+2iTS^\dagger T&=0_{m_-\times1},\label{1.3c}\\
S_t-iS_{xx}+L_xT-iLKS+2iST^\dagger S&=0_{m_+\times1},\label{1.3d}\\
T_t^\dagger+iT_{xx}^\dagger+S^\dagger K_x+iT^\dagger LK-2iT^\dagger ST^\dagger
&=0_{1\times m_+},\label{1.3e}\\
S_t^\dagger+iS_{xx}^\dagger+T^\dagger L_x+iS^\dagger KL-2iS^\dagger TS^\dagger
&=0_{1\times m_-},\label{1.3f}
\end{align}
\end{subequations}
where $L$ and $K$ are real symmetric long-wave square matrix functions and $T$
and $S$ are complex short-wave column vector functions. Note that \eqref{1.3e}
and \eqref{1.3f} are the complex conjugate transposes of \eqref{1.3c} and
\eqref{1.3d}. In \cite{YON,YON2} the special case where $n=3$, $T=\za S$, and
$K=\za^2L$ for some $\za>0$, is treated.

Let us discuss the contents of the various sections. In Sections
\ref{sec:2}--\ref{sec:4} we discuss the scattering solutions of the AKNS system
$\bv_x=(ik\zS+\CQ)\bv$, those of the dual AKNS system
$\breve{\bv}_x=-\breve{\bv}(ik\zS+\CQ)$, and the reflection and transmission
coefficients. Sections \ref{sec:5} and \ref{sec:6} are devoted to the Marchenko
integral equations and the time evolution of the scattering data. We add
Appendix~\ref{sec:A} on linear equations for wedge products. We also include
Appendix~\ref{sec:B} from \cite{DM24} on Wiener algebra properties of scattering
solutions and scattering coefficients. Appendix~C from \cite{DM24} on the
essential spectrum of the AKNS system can be copied almost verbatim from
\cite{DM24} to the present paper and will not included here.

\section{Jost solutions}\label{sec:2}

Let us define the Jost matrices $\Psi(x,k)$ and $\Phi(x,k)$ as those $n\times n$
matrix solutions of the AKNS system \eqref{1.1} satisfying the asymptotic
conditions
\begin{subequations}\label{2.1}
\begin{alignat}{3}
\Psi(x,k)&=e^{ikx\zS}[I_n+o(1)],&\qquad&x\to+\infty,\label{2.1a}\\
\Phi(x,k)&=e^{ikx\zS}[I_n+o(1)],&\qquad&x\to-\infty,\label{2.1b}
\end{alignat}
\end{subequations}
where $\det\Psi(x,k)=\det\Phi(x,k)=e^{ikx(m_+-m_-)}$ for $(x,k)\in\R^2$. Using
that the entries of the off-diagonal matrix $\CQ(x)$ belong to $L^1(\R)$, we
easily arrive at the Volterra integral equations
\begin{subequations}\label{2.2}
\begin{align}
\Psi(x,k)&=e^{ikx\zS}-\int_x^\infty dy\,e^{-ik(y-x)\zS}\CQ(y)\Psi(y,k),
\label{2.2a}\\
\Phi(x,k)&=e^{ikx\zS}+\int_{-\infty}^x dy\,e^{ik(x-y)\zS}\CQ(y)\Phi(y,k).
\label{2.2b}
\end{align}
\end{subequations}
In terms of the Faddeev functions $M(x,k)=\Psi(x,k)e^{-ikx\zS}$ and
$N(x,k)=\Phi(x,k)e^{-ikx\zS}$, we obtain the Volterra integral equations
\begin{subequations}\label{2.3}
\begin{align}
M(x,k)&=I_n-\int_x^\infty dy\,e^{-ik(y-x)\zS}\CQ(y)M(y,k)e^{ik(y-x)\zS},
\label{2.3a}\\
N(x,k)&=I_n+\int_{-\infty}^x dy\,e^{ik(x-y)\zS}\CQ(y)N(y,k)e^{-ik(x-y)\zS}.
\label{2.3b}
\end{align}
\end{subequations}

Letting $\bzs$ stand for a hermitian and unitary $n\times n$ matrix, we call the
potential $\bzs$-{\it focusing} if
\begin{equation}\label{2.4}
\CQ(x)^\dagger=-\bzs\CQ(x)\bzs,
\end{equation}
where the dagger stands for the matrix conjugate transpose. If $\bzs=I_n$, the
potential is called {\it focusing}. It is easily verified that for
$\bzs$-focusing potentials and $(x,k)\in\R^2$
\begin{subequations}\label{2.5}
\begin{alignat}{3}
\Psi(x,k)^\dagger&=\bzs\Psi(x,k)^{-1}\bzs,&\quad
\Phi(x,k)^\dagger&=\bzs\Phi(x,k)^{-1}\bzs,\label{2.5a}\\
M(x,k)^\dagger&=\bzs M(x,k)^{-1}\bzs,&\quad
N(x,k)^\dagger&=\bzs N(x,k)^{-1}\bzs,\label{2.5b}
\end{alignat}
\end{subequations}
whenever $\bzs$ and $\zS$ commute. On the contrary,
\begin{subequations}\label{2.6}
\begin{alignat}{3}
\Psi(x,k)^\dagger&=\bzs\Psi(x,-k)^{-1}\bzs,&\quad
\Phi(x,k)^\dagger&=\bzs\Phi(x,-k)^{-1}\bzs,\label{2.6a}\\
M(x,k)^\dagger&=\bzs M(x,-k)^{-1}\bzs,&\quad
N(x,k)^\dagger&=\bzs N(x,-k)^{-1}\bzs,\label{2.6b}
\end{alignat}
\end{subequations}
whenever $\bzs$ and $\zS$ anticommute. We observe that the $n\times n$ matrices
$\bzs$ having diagonal blocks of the respective orders $m_+$, $1$, and $m_-$,
have the form
\begin{equation}\label{2.7}
\bzs=\underbrace{\begin{bmatrix}\bzs_{++}&\bzero&\bzero\\ \bzero&\zs_{00}
&\bzero\\\bzero&\bzero&\bzs_{--}\end{bmatrix}}_{\text{if}\ \bzs\zS=\zS\bzs},
\qquad\bzs=\underbrace{\begin{bmatrix}\bzero&\bzero&\bzs_{+-}\\
\bzero&\zs_{00}&\bzero\\ \bzs_{-+}&\bzero&\bzero\end{bmatrix}}_{\text{if}\
\bzs\zS=-\zS\bzs\ \text{and}\ m_+=m_-}.
\end{equation}
Moreover, if $\bzs$ is hermitian and unitary and commutes with $\zS$, then
$\bzs_{++}$ and $\bzs_{--}$ are hermitian and unitary and $\zs_{00}=\pm1$. On
the other hand, if $\bzs$ is hermitian and unitary and anticommutes with $\zS$,
then $\bzs_{+-}$ and $\bzs_{-+}$ are unitary and each other's adjoints (and
hence $m_+=m_-=\tfrac{1}{2}(n-1)$) and $\zs_{00}=\pm1$.

Define the following submatrices of the $n\times n$ identity matrix $I_n$:
$$\be_+=\begin{bmatrix}I_{m_+}\\0_{(1+m_-)\times m_+}\end{bmatrix},\quad
\be_0=\begin{bmatrix}0_{m_+}\\1\\0_{m_-}\end{bmatrix},\quad
\be_-=\begin{bmatrix}0_{(m_++1)\times m_-}\\I_{m_-}\end{bmatrix}.$$

\begin{theorem}\label{th:2.1}
There exist $n\times n$ matrices $K(x,y)$ and $J(x,y)$ such that the triangular
representations
\begin{subequations}\label{2.8}
\begin{align}
M(x,k)\be_+&=\be_++\int_x^\infty dy\,e^{ik(y-x)}K(x,y)\be_+,\label{2.8a}\\
M(x,k)\be_-&=\be_-+\int_x^\infty dy\,e^{-ik(y-x)}K(x,y)\be_-,\label{2.8b}\\
N(x,k)\be_+&=\be_++\int_{-\infty}^x dy\,e^{-ik(x-y)}J(x,y)\be_+,\label{2.8c}\\
N(x,k)\be_-&=\be_-+\int_{-\infty}^x dy\,e^{ik(x-y)}J(x,y)\be_-,\label{2.8d}
\end{align}
\end{subequations}
are true, where the integrals
\begin{subequations}\label{2.9}
\begin{align}
\int_x^\infty dy\,\|K(x,y)\be_+\|+\int_{-\infty}^x dy\,\|J(x,y)\be_+\|
\label{2.9a}\\
\int_x^\infty dy\,\|K(x,y)\be_-\|+\int_{-\infty}^x dy\,\|J(x,y)\be_-\|,
\label{2.9b}
\end{align}
\end{subequations}
converge uniformly in $x\in\R$.
\end{theorem}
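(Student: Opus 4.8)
The plan is to prove all four identities by iterating the Volterra equations \eqref{2.3a}--\eqref{2.3b} and resumming the resulting Neumann series into a single Fourier integral over a half-line; I will spell out \eqref{2.8a}, the other three being analogous. Multiplying \eqref{2.3a} on the right by $\be_+$ and using $\zS\be_+=\be_+$, hence $e^{ik(y-x)\zS}\be_+=e^{ik(y-x)}\be_+$, the block $P(x,k):=M(x,k)\be_+$ satisfies
\begin{equation*}
P(x,k)=\be_+-\int_x^\infty dy\,\mathcal{E}(y-x,k)\,\CQ(y)\,P(y,k),\qquad
\mathcal{E}(t,k):=e^{ikt(I_n-\zS)}.
\end{equation*}
Since $I_n-\zS=\mathrm{diag}(0,\dots,0,1,2,\dots,2)$ carries the three ``frequencies'' $0$, $1$, $2$ on the $+$, $0$, and $-$ blocks, every diagonal entry of $\mathcal{E}(t,k)$ is $e^{i\ell kt}$ with $\ell\in\{0,1,2\}$, uniformly bounded for $t\ge0$ and $k\in\C^+$. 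These frequencies are exactly the extra bookkeeping forced by the vanishing diagonal entry of $\zS$.

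Iterating this equation yields a Neumann series whose $j$-th term is the ordered integral over $x\le y_1\le\cdots\le y_j$ of
\[
\mathcal{E}(y_1-x,k)\,\CQ(y_1)\,\mathcal{E}(y_2-y_1,k)\,\CQ(y_2)\cdots\mathcal{E}(y_j-y_{j-1},k)\,\CQ(y_j)\,\be_+ .
\]
Reading this product entrywise as a sum over the index paths dictated by the nonzero blocks of $\CQ$, each such path contributes a pure phase $\exp\bigl(ik\sum_{r=1}^{j}\mu_r(y_r-y_{r-1})\bigr)$ with $\mu_r\in\{0,1,2\}$ and $y_0:=x$. For each term I would introduce the new outer variable $y:=x+\sum_{r=1}^{j}\mu_r(y_r-y_{r-1})$, integrate out the remaining increments, and so rewrite the term as $\int_x^\infty e^{ik(y-x)}K_j(x,y)\be_+\,dy$ with a $k$-independent kernel $K_j$, the Jacobian being absorbed into $K_j$; the support lies in $y\ge x$ since the phase is a nonnegative combination of the increments.

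The step I expect to be the main obstacle is precisely this change of variables together with the frequency bookkeeping, and in particular ruling out a non-oscillatory, $k$-independent remainder. Such a remainder would arise from a term with all $\mu_r=0$, i.e.\ an index path remaining inside the $+$ block; this is impossible because $\CQ$ has a vanishing $(+,+)$ block (it maps each block into the other two), so the transition off $\be_+$ effected by the rightmost factor $\CQ(y_j)$ must reach the $0$ or $-$ block and renders at least one $\mu_r$ strictly positive. It is exactly this off-diagonal structure of $\CQ$ that makes every term genuinely oscillatory and hence representable by a kernel $K_j$ supported on $y\ge x$.

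It then remains to sum the series and verify convergence. The $L^1(\R)$ bound on the entries of $\CQ$ gives the standard factorial majorant $\int_x^\infty\|K_j(x,y)\be_+\|\,dy\le\frac{1}{(j-1)!}\bigl(\int_x^\infty\|\CQ(z)\|\,dz\bigr)^{j}$, so $K(x,y):=\sum_{j\ge1}K_j(x,y)$ converges absolutely and $\int_x^\infty\|K(x,y)\be_+\|\,dy$ is bounded by $\bigl(\int_x^\infty\|\CQ\|\bigr)\exp\bigl(\int_x^\infty\|\CQ\|\bigr)$, uniformly---indeed monotonically---in $x$. This establishes \eqref{2.8a} and the $\be_+$ part of \eqref{2.9}. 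Repeating the construction with $\be_-$ (where $I_n+\zS$ replaces $I_n-\zS$, giving the reflected kernel $e^{-ik(y-x)}$) and with $N$ in place of $M$ (integrating over $(-\infty,x]$ and producing $J$) yields \eqref{2.8b}--\eqref{2.8d} together with the remaining estimates. Note that the distinguished column $\be_0$ is deliberately excluded: there $\zS\be_0=\bzero$ leaves the uncompensated factor $e^{-ik(y-x)\zS}$ with frequencies $\{-1,0,1\}$ of indefinite sign, so no one-sided triangular kernel exists, which is exactly why that Jost solution must be built separately through the wedge-product construction.
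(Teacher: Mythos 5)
Your proposal is correct, and its starting point coincides with the paper's: postmultiplying \eqref{2.3} by $\be_\pm$ produces reduced Volterra equations whose exponentials carry the weights $I_n\mp\zS$ (your $\mathcal{E}$ is exactly the paper's $e^{ik(y-x)\bN_-}$ with $\bN_-=0_{m_+}\oplus1\oplus2I_{m_-}$, and likewise $\bN_+=I_n+\zS$ for the $\be_-$ columns). From there the two arguments diverge. The paper posits the representations \eqref{2.8}, substitutes them back into \eqref{2.3}, and ``strips off the Fourier transform'' to obtain a closed system of Goursat-type integral equations for the blocks $\be_a^TK(x,y)\be_\pm$ and $\be_a^TJ(x,y)\be_\pm$, which it then controls by Gronwall; you instead resum the Neumann series, tracking for each index path the phase $\sum_r\mu_r(y_r-y_{r-1})$ with $\mu_r\in\{0,1,2\}$ and pushing the simplex integral forward onto the single variable $y-x$. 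The two are morally equivalent (iterating the paper's kernel equations reproduces your $K_j$), but each buys something: the paper's route delivers for free the boundary values $K(x,x)$ and $J(x,x)$ in \eqref{2.10}--\eqref{2.11}, which are indispensable later for recovering $\CQ$ from the Marchenko solutions, whereas your route makes completely explicit \emph{why} the support is one-sided — the frequencies are all nonnegative, and the only possible zero-frequency (non-decaying) contribution is killed by the vanishing of the $(+,+)$ (resp.\ $(-,-)$) block of $\CQ$, a fact the paper's proof encodes only implicitly in the absence of an inhomogeneous term in the equation for $\be_+^TK(x,y)\be_+$. Your closing observation that the $\be_0$ column admits no such representation because $-\zS$ has frequencies of both signs correctly identifies the obstruction that forces the wedge-product construction of Section~\ref{sec:3}. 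Two small points to tidy up in a final write-up: justify that the pushforward of each simplex integral under the linear functional $y=x+\sum_r\mu_r(y_r-y_{r-1})$ has an $L^1$ \emph{density} (this needs precisely that not all $\mu_r$ vanish, which you have), and note that the sharper simplex bound is $\tfrac{1}{j!}\bigl(\int_x^\infty\|\CQ\|\bigr)^j$ — though your $\tfrac{1}{(j-1)!}$ majorant is already summable and monotone in $x$, which suffices for the uniformity claimed in \eqref{2.9}.
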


\begin{proof}
Define the $n\times n$ diagonal matrices
$$\bN_+=2I_{m_+}\oplus 1\oplus 0_{m_-},\qquad
\bN_-=0_{m_+}\oplus 1\oplus2I_{m_-}.$$
Postmultiplying \eqref{2.3} by $\be_+$ or $\be_-$, we obtain
\begin{alignat*}{3}
M(x,k)\be_+&=\be_+-\int_x^\infty dy\,e^{ik(y-x)\bN_-}\CQ(y)M(y,k)\be_+,\\
M(x,k)\be_-&=\be_--\int_x^\infty dy\,e^{-ik(y-x)\bN_+}\CQ(y)M(y,k)\be_-,\\
N(x,k)\be_+&=\be_++\int_{-\infty}^x dy\,e^{-ik(x-y)\bN_-}\CQ(y)N(y,k)\be_+,\\
N(x,k)\be_-&=\be_-+\int_{-\infty}^x dy\,e^{ik(x-y)\bN_+}\CQ(y)N(y,k)\be_-.
\end{alignat*}
Substituting \eqref{2.8} in \eqref{2.3} and stripping off the Fourier transform
we get
\begin{align*}
\be_+^TK(x,y)\be_+&=-\int_x^\infty dz\,\be_+^T\CQ(z)K(z,z+y-x)\be_+,\\
\be_0^TK(x,y)\be_+&=-\be_0^T\CQ(y)\be_+-\int_x^y dz\,\be_0^T\CQ(z)K(z,y)\be_+,\\
\be_-^TK(x,y)\be_+&=-\hff\be_-^T\CQ(\hff[x+y])\be_+
-\int_x^{\hff[x+y]}dz\,\be_-^T\CQ(z)K(z,x+y-z)\be_+,\\
\be_+^TK(x,y)\be_-&=-\hff\be_+^T\CQ(\hff[x+y])\be_-
-\int_x^{\hff[x+y]}dz\,\be_+^T\CQ(z)K(z,x+y-z)\be_-,\\
\be_0^TK(x,y)\be_-&=-\be_0^T\CQ(y)\be_--\int_x^y dz\,\be_0^T\CQ(z)K(z,y)\be_-,\\
\be_-^TK(x,y)\be_-&=-\int_x^\infty dz\,\be_-^T\CQ(z)K(z,z+y-x)\be_-,\\
\be_+^TJ(x,y)\be_+&=\int_{-\infty}^x dz\,\be_+^T\CQ(z)J(z,z+y-x)\be_+,\\
\be_0^TJ(x,y)\be_+&=\be_0^T\CQ(y)\be_++\int_y^x dz\,\be_0^T\CQ(z)J(z,y)\be_+,
\end{align*}
\begin{align*}
\be_-^TJ(x,y)\be_+&=\hff\be_-^T\CQ(\hff[x+y])\be_+
+\int_{\hff[x+y]}^xdz\,\be_-^T\CQ(z)J(z,x+y-z)\be_+,\\
\be_+^TJ(x,y)\be_-&=\hff\be_+^T\CQ(\hff[x+y])\be_-
+\int_{\hff[x+y]}^xdz\,\be_+^T\CQ(z)J(z,x+y-z)\be_-,\\
\be_0^TJ(x,y)\be_-&=\be_0^T\CQ(y)\be_-+\int_y^x dz\,\be_0^T\CQ(z)J(z,y)\be_-,\\
\be_-^TJ(x,y)\be_-&=\int_{-\infty}^x dz\,\be_-^T\CQ(z)J(z,z+y-x)\be_-.
\end{align*}
Since each entry of $\CQ(x)$ belongs to $L^1(\R)$, we obtain \eqref{2.9} with
the help of Gronwall's inequality.
\end{proof}

 For $x=y$ we obtain the following expressions yielding the potentials:
\begin{subequations}\label{2.10}
\begin{align}
\be_+^TK(x,x)\be_+&=-\int_x^\infty dz\,\be_+^T\CQ(z)K(z,z)\be_+,\label{2.10a}\\
\be_0^TK(x,x)\be_+&=-\be_0^T\CQ(x)\be_+=-\bq_{0+}(x),\label{2.10b}\\
\be_-^TK(x,x)\be_+&=-\hf\be_-^T\CQ(x)\be_+=-\hf\bq_{-+}(x),\label{2.10c}\\
\be_+^TK(x,x)\be_-&=-\hf\be_+^T\CQ(x)\be_-=-\hf\bq_{+-}(x),\label{2.10d}\\
\be_0^TK(x,x)\be_-&=-\be_0^T\CQ(x)\be_-=-\bq_{0-}(x),\label{2.10e}\\
\be_-^TK(x,x)\be_-&=-\int_x^\infty dz\,\be_-^T\CQ(z)K(z,z)\be_-,\label{2.10f}\\
\be_+^TJ(x,x)\be_+&=\int_{-\infty}^x dz\,\be_+^T\CQ(z)K(z,z)\be_+,
\label{2.10g}\\
\be_0^TJ(x,x)\be_+&=\be_0^T\CQ(z)\be_+=\bq_{0+}(x),\label{2.10h}\\
\be_-^TJ(x,x)\be_+&=\hf\be_-^T\CQ(x)\be_+=\hf\bq_{-+}(x),\label{2.10i}\\
\be_+^TJ(x,x)\be_-&=\hf\be_+^T\CQ(x)\be_-=\hf\bq_{+-}(x),\label{2.10j}\\
\be_0^TJ(x,x)\be_-&=\be_0^T\CQ(z)\be_-=\bq_{0-}(x),\label{2.10k}\\
\be_-^TJ(x,x)\be_-&=\int_{-\infty}^x dz\,\be_-^T\CQ(z)K(z,z)\be_-,\label{2.10l}
\end{align}
\end{subequations}
where
$$\CQ(x)=\begin{bmatrix}0_{m_+\times m_+}&\bq_{+0}(x)&\bq_{+-}(x)\\
\bq_{0+}(x)&0&\bq_{0-}(x)\\ \bq_{-+}(x)&\bq_{-0}(x)
&0_{m_-\times m_-}\end{bmatrix}.$$
These expressions allow us to compute $\CQ(x)$  from either $K(x,x)$ or $J(x,x)$
with the exception of the blocks $\bq_{+0}(x)$ and $\bq_{-0}(x)$. Substituting
them into the integrals on the right-hand side we get
\begin{subequations}\label{2.11}
\begin{align}
\be_+^TK(x,x)\be_+&=\int_x^\infty dz\left(\bq_{+0}(z)\bq_{0+}(z)
+\hf\bq_{+-}(z)\bq_{-+}(z)\right),\label{2.11a}\\
\be_-^TK(x,x)\be_-&=\int_x^\infty dz\left(\hf\bq_{-+}(z)\bq_{+-}(z)
+\bq_{-0}(z)\bq_{0-}(z)\right),\label{2.11b}\\
\be_+^TJ(x,x)\be_+&=\int_{-\infty}^x dz\left(\bq_{+0}(z)\bq_{0+}(z)
+\hf\bq_{+-}(z)\bq_{-+}(z)\right),\label{2.11c}\\
\be_-^TJ(x,x)\be_-&=\int_{-\infty}^x dz\left(\hf\bq_{-+}(z)\bq_{+-}(z)
+\bq_{-0}(z)\bq_{0-}(z)\right).\label{2.11d}
\end{align}
\end{subequations}
The last four expressions allow us to compute the remaining blocks $\bq_{+0}(x)$
and $\bq_{-0}(x)$ of $\CQ(x)$ from either $K(x,x)$ or $J(x,x)$, provided
$\bq_{0+}(x)$ and $\bq_{0-}(x)$ are nonsingular matrices. The remaining blocks
can also be computed from either $K(x,x)$ or $J(x,x)$ if the potential is
$\bzs$-focusing.

\section{Dual scattering solutions}\label{sec:3}

In the preceding section we have defined $2(n-1)$ out of the $2n$ Jost
solutions, derived their Volterra integral equations, and proved their
triangular representations. To define the remaining two Jost solutions, we first
need to introduce the so-called dual Jost solutions.

\subsection{Dual Jost solutions}\label{sec:3.1}

The dual Jost solutions $\breve{\Psi}(x,k)$ and $\breve{\Phi}(x,k)$ are defined
as the unique solutions of the Volterra integral equations
\begin{subequations}\label{3.1}
\begin{align}
\breve{\Psi}(x,k)&=e^{-ikx\zS}+\int_x^\infty dy\,\breve{\Psi}(y,k)\CQ(y)
e^{ik(y-x)\zS},\label{3.1a}\\
\breve{\Phi}(x,k)&=e^{-ikx\zS}-\int_{-\infty}^x dy\,\breve{\Phi}(y,k)\CQ(y)
e^{-ik(x-y)\zS},\label{3.1b}
\end{align}
\end{subequations}
where $(x,k)\in\R^2$. Then $\breve{\Psi}(x,k)$ and $\breve{\Phi}(x,k)$ satisfy
the first order system
\begin{equation}\label{3.2}
\breve{\bv}_x=-\breve{\bv}(ik\zS+\CQ)
\end{equation}
with asymptotic conditions
\begin{subequations}\label{3.3}
\begin{alignat}{3}
\breve{\Psi}(x,k)&=[I_n+o(1)]e^{-ikx\zS},&\qquad&x\to+\infty,\label{3.3a}\\
\breve{\Phi}(x,k)&=[I_n+o(1)]e^{-ikx\zS},&\qquad&x\to-\infty.\label{3.3b}
\end{alignat}
\end{subequations}
It is easily verified that
\begin{equation}\label{3.4}
\breve{\Psi}(x,k)=\Psi(x,k)^{-1},\qquad\breve{\Phi}(x,k)=\Phi(x,k)^{-1},
\end{equation}
where $(x,k)\in\R^2$. We now introduce the dual Faddeev functions
$$\breve{M}(x,k)=e^{ikx\zS}\breve{\Psi}(x,k),\qquad
\breve{N}(x,k)=e^{ikx\zS}\breve{\Phi}(x,k).$$
Then $\breve{M}(x,k)=M(x,k)^{-1}$ and $\breve{N}(x,k)=N(x,k)^{-1}$ satisfy the
Volterra integral equations
\begin{subequations}\label{3.5}
\begin{align}
\breve{M}(x,k)&=I_n+\int_x^\infty dy\,e^{-ik(y-x)\zS}\breve{M}(y,k)\CQ(y)
e^{ik(y-x)\zS},\label{3.5a}\\
\breve{N}(x,k)&=I_n-\int_{-\infty}^x dy\,e^{ik(x-y)\zS}\breve{N}(y,k)\CQ(y)
e^{-ik(x-y)\zS}.\label{3.5b}
\end{align}
\end{subequations}
Moreover, we have the triangular representations
\begin{subequations}\label{3.6}
\begin{align}
\be_+^T\breve{M}(x,k)&=\be_+^T+\int_x^\infty dy\,e^{-ik(y-x)}
\be_+^T\breve{K}(y,x),\label{3.6a}\\
\be_-^T\breve{M}(x,k)&=\be_-^T+\int_x^\infty dy\,e^{ik(y-x)}
\be_-^T\breve{K}(y,x),\label{3.6b}\\
\be_+^T\breve{N}(x,k)&=\be_+^T+\int_{-\infty}^x dy\,e^{ik(x-y)}
\be_+^T\breve{J}(y,x),\label{3.6c}\\
\be_-^T\breve{N}(x,k)&=\be_-^T+\int_{-\infty}^x dy\,e^{-ik(x-y)}
\be_-^T\breve{J}(y,x),\label{3.6d}
\end{align}
\end{subequations}
where
\begin{align*}
\int_x^\infty dy\,\|\be_+^T\breve{K}(y,x)\|
&+\int_{-\infty}^x dy\,\|\be_+^T\breve{J}(y,x)\|<+\infty,\\
\int_x^\infty dy\,\|\be_-^T\breve{K}(y,x)\|
&+\int_{-\infty}^x dy\,\|\be_-^T\breve{J}(y,x)\|<+\infty.
\end{align*}
The proof of these triangular representations is based on Volterra integral
equations for $\be_\pm^T\breve{K}(y,x)$ and $\be_\pm^T\breve{J}(x,y)$ which are
very similar to those for $K(x,y)\be_\pm$ and $J(x,y)\be_\pm$.

Let us consider the adjoint symmetries of the first and third columns of the
 Faddeev functions. If the potential is $\bzs$-focusing and $\bzs$ commutes with
$\zS$ and hence satisfies $\bzs=\bzs_{++}\oplus\zs_{00}\oplus\bzs_{--}$, we get
\begin{subequations}\label{3.7}
\begin{align}
[M(x,k^*)\be_+]^\dagger&=\bzs_{++}\be_+^T\breve{M}(x,k)\bzs,\label{3.7a}\\
[N(x,k^*)\be_+]^\dagger&=\bzs_{++}\be_+^T\breve{N}(x,k)\bzs,\label{3.7b}\\
[M(x,k^*)\be_-]^\dagger&=\bzs_{--}\be_-^T\breve{M}(x,k)\bzs,\label{3.7c}\\
[N(x,k^*)\be_-]^\dagger&=\bzs_{--}\be_-^T\breve{N}(x,k)\bzs.\label{3.7d}
\end{align}
\end{subequations}
Using \eqref{2.8} and \eqref{3.6} we obtain from \eqref{3.7}
\begin{subequations}\label{3.8}
\begin{alignat}{3}
[K(x,y)\be_+]^\dagger&=\bzs_{++}\be_+^T\breve{K}(y,x)\bzs,&\quad
[J(x,y)\be_+]^\dagger&=\bzs_{++}\be_+^T\breve{J}(y,x)\bzs,\label{3.8a}\\
[K(x,y)\be_-]^\dagger&=\bzs_{--}\be_-^T\breve{K}(y,x)\bzs,&\quad
[J(x,y)\be_-]^\dagger&=\bzs_{--}\be_-^T\breve{J}(y,x)\bzs.\label{3.8b}
\end{alignat}
\end{subequations}
On the other hand, if the potential is $\bzs$-focusing and $\bzs$ anticommutes
with $\zS$ and hence has the form \eqref{2.7} for $\bzs\zS=-\zS\bzs$, we get
\begin{subequations}\label{3.9}
\begin{align}
[M(x,k^*)\be_+]^\dagger&=\bzs_{+-}\be_-^T\breve{M}(x,-k)\bzs,\label{3.9a}\\
[N(x,k^*)\be_+]^\dagger&=\bzs_{+-}\be_-^T\breve{N}(x,-k)\bzs,\label{3.9b}\\
[M(x,k^*)\be_-]^\dagger&=\bzs_{-+}\be_+^T\breve{M}(x,-k)\bzs,\label{3.9c}\\
[N(x,k^*)\be_-]^\dagger&=\bzs_{-+}\be_+^T\breve{N}(x,-k)\bzs.\label{3.9d}
\end{align}
\end{subequations}
Using \eqref{2.8} and \eqref{3.6} we obtain from \eqref{3.9}
\begin{subequations}\label{3.10}
\begin{alignat}{3}
[K(x,y)\be_+]^\dagger&=\bzs_{+-}\be_-^T\breve{K}(y,x)\bzs,\quad
[J(x,y)\be_+]^\dagger&=\bzs_{+-}\be_-^T\breve{J}(y,x)\bzs,\label{3.10a}\\
[K(x,y)\be_-]^\dagger&=\bzs_{-+}\be_+^T\breve{K}(y,x)\bzs,\quad
[J(x,y)\be_-]^\dagger&=\bzs_{-+}\be_+^T\breve{J}(y,x)\bzs.\label{3.10b}
\end{alignat}
\end{subequations}

\subsection{The missing two Jost solutions}\label{sec:3.2}

The $n-1$ columns of the Faddeev matrices $M(x,k)\be_+$ and $N(x,k)\be_-$ yield
$n-1$ linearly independent solutions of the differential system
\begin{equation}\label{3.11}
W_x=(ik\zS+\CQ)W-ikW\zS
\end{equation}
that are continuous in $k\in\C^+\cup\R$, are analytic in $k\in\C^+$, and tend to
one of the columns of $I_n$ (with the exception of the $(m_++1)$-st column) as
$k\to\infty$ from within $\C^+\cup\R$. Analogously, the $n-1$ columns of the
 Faddeev matrices $N(x,k)\be_+$ and $M(x,k)\be_-$ yield $n-1$ linearly
independent solutions of the differential system \eqref{3.11} that are
continuous in $k\in\C^-\cup\R$, are analytic in $k\in\C^-$, and tend to one of
the columns of $I_n$ (with the exception of the $(m_++1)$-st column) as
$k\to\infty$ from within $\C^-\cup\R$. Our objective is to construct the missing
two Faddeev functions with the correct analyticity properties.

Let $\breve{\Psi}(x,k)$ and $\breve{\Phi}(x,k)$ denote the dual Jost solutions.
Then their rows $\be_j^T\breve{\Psi}(x,k)$ and $\be_j^T\breve{\Phi}(x,k)$
satisfy \eqref{3.2}. Thus their transposes $\breve{\Psi}(x,k)^T\be_j$ and
$\breve{\Phi}(x,k)^T\be_j$ satisfy the differential system
$$\breve{\bv}^T_x=-(ik\zS+\CQ^T)\breve{\bv}^T.$$
Hence for each constant $\za\in\R$ the function
$\breve{\bw}(x)=e^{\za x}\breve{\bv}(x)^T$ satisfies the differential system
$$\breve{\bw}_x=(\za I_n-ik\zS-\CQ^T)\breve{\bw}=B\breve{\bw},$$
where $B=\za I_n-ik\zS-\CQ^T$. To get $\breve{\bw}$ to satisy the AKNS system
\eqref{1.1} we need to choose $\za$ in such a way that the identity 
$$(\text{Tr}\,B)I_n-B^T=ik\zS+\CQ$$
is satisfied. To do so we choose $\za=\text{Tr}\,B=\tfrac{ik(m_+-m_-)}{n-1}$.
Thus if $\breve{\bv}^1,\ldots,\breve{\bv}^{n-1}$ are $n-1$ linearly independent
row vector solutions of the AKNS system $\breve{\bv}_x=-\breve{\bv}(ik\zS+\CQ)$
and $\za=\tfrac{ik(m_+-m_-)}{n-1}$, then the wedge product
$$(e^{\za x}{\breve{\bv}}^1)^T\wedge\ldots\wedge
(e^{\za x}{\breve{\bv}}^{n-1})^T,$$
is a vector solution of the AKNS system \eqref{1.1}, provided we write this
wedge product with respect to the basis $\{\bd_1,\ldots,\bd_n\}$ of
$\zL^{n-1}(\R^n)$ [See Appendix~\ref{sec:A}]. Hence, with respect to the basis
$\{\bd_1,\ldots,\bd_n\}$ of $\zL^{n-1}(\R^n)$, the vector functions
\begin{subequations}\label{3.12}
\begin{align}
\zO_+(x,k)&\simeq(-1)^{m_+m_-}\left(e^{\za x}\breve{\Phi}(x,k)^T\be_1\wedge
\ldots\wedge e^{\za x}\breve{\Phi}(x,k)^T\be_{m_+}\nonumber\right.\\
&\left.\wedge e^{\za x}\breve{\Psi}(x,k)^T\be_{n+1-m_-}\wedge
\ldots\wedge e^{\za x}\breve{\Psi}(x,k)^T\be_n\right),\label{3.12a}\\
\zO_-(x,k)&\simeq(-1)^{m_+m_-}\left(e^{\za x}\breve{\Psi}(x,k)^T\be_1\wedge
\ldots\wedge e^{\za x}\breve{\Psi}(x,k)^T\be_{m_+}\nonumber\right.\\
&\left.\wedge e^{\za x}\breve{\Phi}(x,k)^T\be_{n+1-m_-}\wedge
\ldots\wedge e^{\za x}\breve{\Phi}(x,k)^T\be_n\right),\label{3.12b}
\end{align}
\end{subequations}
are solutions of the AKNS system \eqref{1.1}. Using that
\begin{align*}
\breve{\Psi}(x,k)^T\be_j&=\begin{cases}e^{-ikx}\breve{M}(x,k)^T\be_j,
&j=1,\ldots,m_+,\\e^{ikx}\breve{M}(x,k)^T\be_j,&j=n+1-m_-,\ldots,n,\end{cases}\\
\breve{\Phi}(x,k)^T\be_j&=\begin{cases}e^{-ikx}\breve{N}(x,k)^T\be_j,
&j=1,\ldots,m_+,\\e^{ikx}\breve{N}(x,k)^T\be_j,&j=n+1-m_-,\ldots,n,\end{cases}
\end{align*}
while
$$\left(e^{\za x}e^{-ikx}\right)^{m_+}\left(e^{\za x}e^{ikx}\right)^{m_-}
=\left[e^{-\tfrac{2ikm_-x}{n-1}}\right]^{m_+}\left[e^{\tfrac{2ikm_+x}{n-1}}
\right]^{m_-}=1,$$
we obtain
\begin{subequations}\label{3.13}
\begin{align}
\zO_+(x,k)&\simeq(-1)^{m_+m_-}\left(\breve{N}(x,k)^T\be_1\wedge\ldots\wedge
\breve{N}(x,k)^T\be_{m_+}\nonumber\right.\\&\left.\wedge
\breve{M}(x,k)^T\be_{n+1-m_-}\wedge\ldots\wedge\breve{M}(x,k)^T\be_n\right),
\label{3.13a}\\
\zO_-(x,k)&\simeq(-1)^{m_+m_-}\left(\breve{M}(x,k)^T\be_1\wedge\ldots\wedge
\breve{M}(x,k)^T\be_{m_+}\nonumber\right.\\&\left.\wedge
\breve{N}(x,k)^T\be_{n+1-m_-}\wedge\ldots\wedge\breve{N}(x,k)^T\be_n\right).
\label{3.13b}
\end{align}
\end{subequations}
Hence, $\zO^\pm(x,k)$ is continuous in $k\in\C^\pm\cup\R$, is analytic in
$k\in\C^\pm$, and tends to $(-1)^{m_+m_-}\bg_{m_++1}=\bd_{m_++1}$ (to be
identified with $\be_{m_++1}$) as $k\to\infty$ from within $\C^\pm\cup\R$.

Let us now write
\begin{align*}
\bXi^+(x,k)&=\begin{bmatrix}\Psi(x,k)\be_+&\zO^+(x,k)&\Phi(x,k)\be_-
\end{bmatrix},\\
\bXi^-(x,k)&=\begin{bmatrix}\Phi(x,k)\be_+&\zO^-(x,k)&\Psi(x,k)\be_-
\end{bmatrix}.
\end{align*}
Then $\bXi^\pm(x,k)$ is an $n\times n$ matrix solution of the AKNS system
\eqref{1.1} such that $\bXi^\pm(x,k)e^{-ikx\zS}$ is continuous in
$k\in\C^\pm\cup\R$, is analytic in $k\in\C^\pm$, and tends to $I_n$ as
$k\to\infty$ from within $\C^\pm\cup\R$. We observe that
\begin{align*}
\bXi^+(x,k)e^{-ikx\zS}&=\begin{bmatrix}N(x,k)\be_+&\zO^+(x,k)&M(x,k)\be_-
\end{bmatrix},\\
\bXi^-(x,k)e^{-ikx\zS}&=\begin{bmatrix}M(x,k)\be_+&\zO^-(x,k)&N(x,k)\be_-
\end{bmatrix}.
\end{align*}

\section{Scattering matrices}\label{sec:4}

In this section we define the transition coefficients $A_l(k)$ and $A_r(k)$
which act as matrix coupling constants between the Jost matrices $\Psi(x,k)$ and
$\Phi(x,k)$ and between the dual Jost matrices $\breve{\Psi}(x,k)$ and
$\breve{\Phi}(x,k)$. We then go to define the transition coefficients coupling
$\bXi^+(x,k)$ and $\bXi^-(x,k)$.

\subsection{Transition matrices}\label{sec:4.1}

Since the Jost solutions $\Psi(x,k)$ and $\Phi(x,k)$ satisfy the same
homogeneous first order system \eqref{1.1}, there exist so-called transition
matrices $A_l(k)$ and $A_r(k)$, one the inverse of the other, such that
\begin{equation}\label{4.1}
\Psi(x,k)=\Phi(x,k)A_l(k),\qquad\Phi(x,k)=\Psi(x,k)A_r(k),
\end{equation}
where $(x,k)\in\R^2$. Similarly, since the dual Jost solutions satisfy the same
homogeneous linear first order system \eqref{3.2}, there exist so-called dual
transition matrices $\breve{A}_l(k)$ and $\breve{A}_r(k)$, one the inverse of
the other, such that
\begin{equation}\label{4.2}
\breve{\Psi}(x,k)=\breve{A}_l(k)\breve{\Phi}(x,k),\qquad
\breve{\Phi}(x,k)=\breve{A}_r(k)\breve{\Psi}(x,k),
\end{equation}
where $(x,k)\in\R^2$. Then \eqref{3.4} imply that
\begin{equation}\label{4.3}
\breve{A}_l(k)=A_l(k)^{-1}=A_r(k),\qquad\breve{A}_r(k)=A_r(k)^{-1}=A_l(k),
\end{equation}
where $\det A_l(k)=\det A_r(k)=1$ for every $k\in\R$. For $\bzs$-focusing
potentials commuting with $\zS$ we obtain for $k\in\R$ the symmetry relations
\begin{equation}\label{4.4}
A_l(k)^\dagger=\bzs A_r(k)\bzs,\qquad A_r(k)^\dagger=\bzs A_l(k)\bzs,
\end{equation}
thus making $A_l(k)$ and $A_r(k)$ $\bzs$-unitary. Thus for focusing potentials
the transition matrices $A_l(k)$ and $A_r(k)$ are unitary. For $\bzs$-focusing
potentials anticommuting with $\zS$ we obtain for $k\in\R$ the symmetry
relations
\begin{equation}\label{4.5}
A_l(k)^\dagger=\bzs A_r(-k)\bzs,\qquad A_r(k)^\dagger=\bzs A_l(-k)\bzs.
\end{equation}

Premultiplying \eqref{2.2} by $e^{-ikx\zS}$ and using \eqref{4.1}, we obtain
\begin{subequations}\label{4.6}
\begin{align}
A_l(k)&=I_n-\int_{-\infty}^\infty dy\,e^{-iky\zS}\CQ(y)M(y,k)e^{iky\zS},
\label{4.6a}\\
A_r(k)&=I_n+\int_{-\infty}^\infty dy\,e^{-iky\zS}\CQ(y)N(y,k)e^{iky\zS}.
\label{4.6b}
\end{align}
\end{subequations}
As a result,
\begin{subequations}\label{4.7}
\begin{align}
A_l(k)_{++}&=\be_+^TA_l(k)\be_+=I_{m_+}-\int_{-\infty}^\infty dy\,
\be_+^T\CQ(y)M(y,k)\be_+,\label{4.7a}\\
A_r(k)_{--}&=\be_-^TA_r(k)\be_-=I_{m_-}+\int_{-\infty}^\infty dy\,
\be_-^T\CQ(y)N(y,k)\be_-,\label{4.7b}\\
\intertext{are continuous in $k\in\C^+\cup\R$, are analytic in $k\in\C^+$, and
tend to the identity as $k\to\infty$ from within $\C^+\cup\R$, whereas}
A_r(k)_{++}&=\be_+^TA_r(k)\be_+=I_{m_+}+\int_{-\infty}^\infty dy\,
\be_+^T\CQ(y)N(y,k)\be_+,\label{4.7c}\\
A_l(k)_{--}&=\be_-^TA_l(k)\be_-=I_{m_-}-\int_{-\infty}^\infty dy\,
\be_-^T\CQ(y)N(y,k)\be_-,\label{4.7d}
\end{align}
\end{subequations}
are continuous in $k\in\C^-\cup\R$, are analytic in $k\in\C^-$, and tend to the
identity as $k\to\infty$ from within $\C^-\cup\R$.

\subsection{Asymptotic properties}\label{sec:4.2}

Using \eqref{2.2} and \eqref{3.1} we easily prove that
\begin{alignat*}{3}
e^{-ikx\zS}\Psi(x,k)&\to\begin{cases}I_n,&x\to+\infty,\\A_l(k),&x\to-\infty,
\end{cases}&\ e^{-ikx\zS}\Phi(x,k)&\to\begin{cases}A_r(k),&x\to+\infty,\\
I_n,&x\to-\infty,\end{cases}\\
\breve{\Psi}(x,k)e^{ikx\zS}&\to\begin{cases}I_n,&x\to+\infty,\\
A_r(k),&x\to-\infty,\end{cases}&\
\breve{\Phi}(x,k)e^{ikx\zS}&\to\begin{cases}A_l(k),&x\to+\infty,\\
I_n,&x\to-\infty.\end{cases}
\end{alignat*}
Using the above, we easily verify the asymptotic identities
\begin{subequations}\label{4.8}
\begin{align}
e^{-ikx\zS}\bXi^+(x,k)&\to
\begin{cases}\ClD_+^\up(k)=\begin{bmatrix}I_{m_+}&\ClD_+^\up(k)_{+0}
&A_r(k)_{+-}\\ \bzero&\ClD_+^\up(k)_{00}&A_r(k)_{0-}\\
\bzero&\xcancel{\ClD_+^\up(k)_{-0}}&A_r(k)_{--}\end{bmatrix},
&x\to+\infty,\\
\ClD_+^\dn(k)=\begin{bmatrix}A_l(k)_{++}&\xcancel{\ClD_+^\dn(k)_{+0}}
&\bzero\\A_l(k)_{0+}&\ClD_+^\dn(k)_{00}&\bzero\\
A_l(k)_{-+}&\ClD_+^\dn(k)_{-0}&I_{m_-}\end{bmatrix},&x\to-\infty,
\end{cases}\label{4.8a}\\
e^{-ikx\zS}\bXi^-(x,k)&\to\begin{cases}
\ClD_-^\dn(k)=\begin{bmatrix}A_r(k)_{++}&\xcancel{\ClD_-^\dn(k)_{+0}}
&\bzero\\A_r(k)_{0+}&\ClD_-^\dn(k)_{00}&\bzero\\
A_r(k)_{-+}&\ClD_-^\dn(k)_{-0}&I_{m_-}\end{bmatrix},&x\to+\infty,\\
\ClD_-^\up(k)=\begin{bmatrix}I_{m_+}&\ClD_-^\up(k)_{+0}&A_l(k)_{+-}\\
\bzero&\ClD_-^\up(k)_{00}&A_l(k)_{0-}\\
\bzero&\xcancel{\ClD_-^\up(k)_{-0}}&A_l(k)_{--}\end{bmatrix},
&x\to-\infty,\end{cases}\label{4.8b}
\end{align}
\end{subequations}
where the cancelled entries will turn out to be zero matrices. Then the second
columns coincide with the expressions
\begin{subequations}\label{4.9}
\begin{align}
&\ClD_+^\up(k)\be_0=(-1)^{m_+m_-}\times\label{4.9a}\\&\times\left(
e^{ikx\bN_-}A_l(k)^T\be_1\wedge\ldots\wedge e^{ikx\bN_-}A_l(k)^T\be_{m_+}\wedge
\be_{n+1-m_+}\wedge\ldots\wedge\be_n\right),\nonumber\\
&\ClD_+^\dn(k)\be_0=(-1)^{m_+m_-}\times\label{4.9b}\\&\times\left(\be_1\wedge
\ldots\wedge\be_{m_+}\wedge e^{-ikx\bN_+}A_r(k)^T\be_{n+1-m_-}\wedge\ldots\wedge
e^{-ikx\bN_+}A_r(k)^T\be_n\right),\nonumber\\
&\ClD_-^\dn(k)\be_0=(-1)^{m_+m_-}\times\label{4.9c}\\&\times\left(\be_1\wedge
\ldots\wedge\be_{m_+}\wedge e^{-ikx\bN_+}A_l(k)^T\be_{n+1-m_+}\wedge\ldots\wedge
e^{-ikx\bN_+}A_l(k)^T\be_n\right),\nonumber\\
&\ClD_-^\up(k)\be_0=(-1)^{m_+m_-}\times\label{4.9d}\\&\times\left(
e^{ikx\bN_-}A_r(k)^T\be_1\wedge\ldots\wedge e^{ikx\bN_-}A_r(k)^T\be_{m_+}\wedge
\be_{n+1-m_+}\wedge\ldots\wedge\be_n\right),\nonumber
\end{align}
\end{subequations}
where $\bN_+=2I_{m_+}\oplus 1\oplus 0_{m_-}$ and
$\bN_-=0_{m_+}\oplus 1\oplus2I_{m_-}$. Then the right-hand sides of \eqref{4.9a}
and \eqref{4.9d} are linear combinations of $\bd_1,\ldots,\bd_{m_++1}$, whereas
the right-hand sides of \eqref{4.9b} and \eqref{4.9c} are linear combinations of
$\bd_{m_++1},\ldots,\bd_n$. Hence,
\begin{subequations}\label{4.10}
\begin{align}
\ClD_+^\up(k)_{-0}&=\ClD_-^\up(k)_{-0}=0_{m_-\times1},\label{4.10a}\\
\ClD_+^\dn(k)_{+0}&=\ClD_-^\dn(k)_{+0}=0_{m_+\times1},\label{4.10b}
\end{align}
\end{subequations}
thus justifying the cancellations of certain entries of \eqref{4.8a} and
\eqref{4.8b}. For later use we now evaluate the inverses
\begin{subequations}\label{4.11}
\begin{align}
\ClD_+^\up(k)^{-1}&=\begin{bmatrix}I_{m_+}&-\ClD_+^\up(k)_{+0}
&\ClD_+^\up(k)_{+0}\ClD_+^\up(k)_{00}^{-1}A_r(k)_{0-}-A_r(k)_{+-}\\
\bzero&1&-\ClD_+^\up(k)_{00}^{-1}A_r(k)_{0-}\\ \bzero&\bzero
&I_{m_-}\end{bmatrix}\times\nonumber\\
&\times\text{diag}\left(I_{m_+},\ClD_+^\up(k)_{00}^{-1},A_r(k)_{--}^{-1}
\right),\label{4.11a}\\
\ClD_-^\dn(k)^{-1}&=\begin{bmatrix}I_{m_+}&\bzero&\bzero\\
-\ClD_-^\dn(k)_{00}^{-1}A_r(k)_{0+}&1&\bzero\\
\ClD_-^\dn(k)_{-0}\ClD_-^\dn(k)_{00}^{-1}A_r(k)_{0+}-A_r(k)_{-+}
&-\ClD_-^\dn(k)_{-0}&I_{m_-}\end{bmatrix}\times\nonumber\\
&\times\text{diag}\left(A_r(k)_{++}^{-1},\ClD_-^\dn(k)_{00}^{-1},I_{m_-}
\right),\label{4.11b}\\
\ClD_-^\up(k)^{-1}&=\begin{bmatrix}I_{m_+}&-\ClD_-^\up(k)_{+0}
&\ClD_-^\up(k)_{+0}\ClD_-^\up(k)_{00}^{-1}A_l(k)_{0-}-A_l(k)_{+-}\\
\bzero&1&-\ClD_-^\up(k)_{00}^{-1}A_l(k)_{0-}\\ \bzero&\bzero
&I_{m_-}\end{bmatrix}\times\nonumber\\
&\times\text{diag}\left(I_{m_+},\ClD_-^\up(k)_{00}^{-1},A_l(k)_{--}^{-1}
\right),\label{4.11c}\\
\ClD_+^\dn(k)^{-1}&=\begin{bmatrix}I_{m_+}&\bzero&\bzero\\
-\ClD_+^\dn(k)_{00}^{-1}A_l(k)_{0+}&1&\bzero\\
\ClD_+^\dn(k)_{-0}\ClD_+^\dn(k)_{00}^{-1}A_l(k)_{0+}-A_l(k)_{-+}
&-\ClD_+^\dn(k)_{-0}&I_{m_-}\end{bmatrix}\times\nonumber\\
&\times\text{diag}\left(A_l(k)_{++}^{-1},\ClD_+^\dn(k)_{00}^{-1},I_{m_-}
\right).\label{4.11d}
\end{align}
\end{subequations}

Since $\bXi^+(x,k)$ and $\bXi^-(x,k)$ are $n\times n$ matrix solutions of the
same homogeneous linear first order system, there exist {\it scattering
matrices} $S(k)$ and $\tS(k)$, one the inverse of the other, such that
\begin{equation}\label{4.12}
\bXi^-(x,k)=\bXi^+(x,k)S(k),\qquad\bXi^+(x,k)=\bXi^-(x,k)\tS(k),
\end{equation}
where $(x,k)\in\R^2$. We call the block diagonal entries of $S(k)$ and $\tS(k)$
{\it transmission coefficients} and minus their block off-diagonal entries
{\it reflection coefficients}. Using the asymptotic conditions \eqref{4.8}
we also get
\begin{subequations}\label{4.13}
\begin{align}
S(k)=\ClD_+^\up(k)^{-1}\ClD_-^\dn(k)=\ClD_+^\dn(k)^{-1}\ClD_-^\up(k),
\label{4.13a}\\
\tS(k)=\ClD_-^\dn(k)^{-1}\ClD_+^\up(k)=\ClD_-^\up(k)^{-1}\ClD_+^\dn(k).
\label{4.13b}
\end{align}
\end{subequations}
Using \eqref{4.8} and \eqref{4.11} we get from \eqref{4.13} the four expressions
\begin{subequations}\label{4.14}
\begin{align}
S(k)&=\begin{bmatrix}I_{m_+}&-\ClD_+^\up(k)_{+0}
&\ClD_+^\up(k)_{+0}\ClD_+^\up(k)_{00}^{-1}A_r(k)_{0-}-A_r(k)_{+-}\\
\bzero&1&-\ClD_+^\up(k)_{00}^{-1}A_r(k)_{0-}\\ \bzero&\bzero
&I_{m_-}\end{bmatrix}\times\nonumber\\ &\times
\begin{bmatrix}A_r(k)_{++}&\bzero&\bzero\\ \ClD_+^\up(k)_{00}^{-1}A_r(k)_{0+}
&\ClD_+^\up(k)_{00}^{-1}\ClD_-^\dn(k)_{00}&\bzero\\ A_r(k)_{--}^{-1}A_r(k)_{-+}
&A_r(k)_{--}^{-1}\ClD_-^\dn(k)_{-0}&A_r(k)_{--}^{-1}\end{bmatrix},
\label{4.14a}
\end{align}
\begin{align}
\tS(k)&=\begin{bmatrix}I_{m_+}&\bzero&\bzero\\
-\ClD_-^\dn(k)_{00}^{-1}A_r(k)_{0+}&1&\bzero\\
\ClD_-^\dn(k)_{-0}\ClD_-^\dn(k)_{00}^{-1}A_r(k)_{0+}-A_r(k)_{-+}
&-\ClD_-^\dn(k)_{-0}&I_{m_-}\end{bmatrix}\times\nonumber\\
&\times\begin{bmatrix}A_r(k)_{++}^{-1}&A_r(k)_{++}^{-1}\ClD_+^\up(k)_{+0}
&A_r(k)_{++}^{-1}A_r(k)_{+-}\\ \bzero&\ClD_-^\dn(k)_{00}^{-1}\ClD_+^\up(k)_{00}
&\ClD_-^\dn(k)_{00}^{-1}A_r(k)_{0-}\\ \bzero&\bzero&A_r(k)_{--}\end{bmatrix},
\label{4.14b}\\
S(k)&=\begin{bmatrix}I_{m_+}&\bzero&\bzero\\
-\ClD_+^\dn(k)_{00}^{-1}A_l(k)_{0+}&1&\bzero\\
\ClD_+^\dn(k)_{-0}\ClD_+^\dn(k)_{00}^{-1}A_l(k)_{0+}-A_l(k)_{-+}
&-\ClD_+^\dn(k)_{-0}&I_{m_-}\end{bmatrix}\times\nonumber\\
&\times\begin{bmatrix}A_l(k)_{++}^{-1}&A_l(k)_{++}^{-1}\ClD_-^\up(k)_{+0}
&A_l(k)_{++}^{-1}A_l(k)_{+-}\\ \bzero&\ClD_+^\dn(k)_{00}^{-1}\ClD_-^\up(k)_{00}
&\ClD_+^\dn(k)_{00}^{-1}A_l(k)_{0-}\\ \bzero&\bzero&A_l(k)_{--}\end{bmatrix},
\label{4.14c}\\
\tS(k)&=\begin{bmatrix}I_{m_+}&-\ClD_-^\up(k)_{+0}
&\ClD_-^\up(k)_{+0}\ClD_-^\up(k)_{00}^{-1}A_l(k)_{0-}-A_l(k)_{+-}\\
\bzero&1&-\ClD_-^\up(k)_{00}^{-1}A_l(k)_{0-}\\ \bzero&\bzero
&I_{m_-}\end{bmatrix}\times\nonumber\\
&\times\begin{bmatrix}A_l(k)_{++}&\bzero&\bzero\\
\ClD_-^\up(k)_{00}^{-1}A_l(k)_{0+}&\ClD_-^\up(k)_{00}^{-1}\ClD_+^\dn(k)_{00}
&\bzero\\ A_l(k)_{--}^{-1}A_l(k)_{-+}&A_l(k)_{--}^{-1}\ClD_+^\dn(k)_{-0}
&A_l(k)_{--}^{-1}\end{bmatrix}.\label{4.14d}
\end{align}
\end{subequations}
Choosing the simpler expression for the block entries from the two versions of
$S(k)$ and the two versions of $\tS(k)$, we obtain
\begin{subequations}\label{4.15}
\begin{align}
S(k)&=\begin{bmatrix}A_l(k)_{++}^{-1}&A_l(k)_{++}^{-1}
\ClD_-^\up(k)_{+0}&A_l(k)_{++}^{-1}A_l(k)_{+-}\\S(k)_{0+}
&S(k)_{00}&S(k)_{0-}\\
A_r(k)_{--}^{-1}A_r(k)_{-+}&A_r(k)_{--}^{-1}\ClD_-^\dn(k)_{-0}
&A_r(k)_{--}^{-1}\end{bmatrix},\label{4.15a}\\
\tS(k)&=\begin{bmatrix}A_r(k)_{++}^{-1}&A_r(k)_{++}^{-1}
\ClD_+^\up(k)_{+0}&A_r(k)_{++}^{-1}A_r(k)_{+-}\\ \tS(k)_{0+}
&\tS(k)_{00}&\tS(k)_{0-}\\
A_l(k)_{--}^{-1}A_l(k)_{-+}&A_l(k)_{--}^{-1}\ClD_+^\dn(k)_{-0}
&A_l(k)_{--}^{-1}\end{bmatrix},\label{4.15b}
\end{align}
\end{subequations}
where
\begin{align*}
S(k)_{0+}&=-\ClD_+^\dn(k)_{00}^{-1}A_l(k)_{0+}A_l(k)_{++}^{-1},\\
S(k)_{0-}&=-\ClD_+^\up(k)_{00}^{-1}A_r(k)_{0-}A_r(k)_{--}^{-1},\\
\tS(k)_{0+}&=-\ClD_-^\dn(k)_{00}^{-1}A_r(k)_{0+}A_r(k)_{++}^{-1},\\
\tS(k)_{0-}&=-\ClD_-^\up(k)_{00}^{-1}A_l(k)_{0-}A_l(k)_{--}^{-1},\\
S(k)_{00}&=\ClD_+^\up(k)_{00}^{-1}
\left(\ClD_-^\dn(k)_{00}-A_r(k)_{0-}
A_r(k)_{--}^{-1}\ClD_-^\dn(k)_{-0}\right)\\
&=\ClD_+^\dn(k)_{00}^{-1}\left(\ClD_-^\up(k)_{00}-A_l(k)_{0+}
A_l(k)_{++}^{-1}\ClD_-^\up(k)_{+0}\right),\\
\tS(k)_{00}&=\ClD_-^\up(k)_{00}^{-1}\left(\ClD_+^\dn(k)_{00}
-A_l(k)_{0-}A_l(k)_{--}^{-1}\ClD_+^\dn(k)_{-0}\right)\\
&=\ClD_-^\dn(k)_{00}^{-1}\left(\ClD_+^\up(k)_{00}-A_r(k)_{0+}
A_r(k)_{++}^{-1}\ClD_+^\up(k)_{+0}\right).
\end{align*}

By a {\it spectral singularity} we mean a real zero of any of the functions
$\det A_l(k)_{++}$, $\det A_r(k)_{++}$, $\det A_l(k)_{--}$, $\det A_r(k)_{--}$,
$S(k)_{00}$, and $\tS(k)_{00}$. Assuming the absence of spectral singularities,
we denote by $w$ and $\tw$ the winding numbers of the curves in the complex
plane obtained by computing the values of the respective functions $S(k)_{00}$
and $\tS(k)_{00}$ as $k$ runs along the real line from $-\infty$ to $+\infty$.
As a result, there exist unique functions $\A^\pm(k)$ and $\tilde{\A}^\pm(k)$
that are continuous in $k\in\C^\pm\cup\R$, are analytic in $k\in\C^\pm$, do not
have any zeros $k\in\C^\pm\cup\R$, and tend to $1$ as $k\to\infty$ from within
$\C^\pm\cup\R$ such that the following Wiener-Hopf factorizations hold true
\cite{GF,Kr}:
\begin{subequations}\label{4.16}
\begin{align}
S(k)_{00}&=\left(\frac{k-i}{k+i}\right)^w\A^+(k)\A^-(k),\label{4.16a}\\
\tS(k)_{00}&=\left(\frac{k-i}{k+i}\right)^{\tw}\tilde{\A}^+(k)\tilde{\A}^-(k).
\label{4.16b}
\end{align}
\end{subequations}
Moreover, for certain $L^1$-functions $\K^\pm$ and $\tilde{\K}^\pm$ we have
$$\A^\pm(k)=1\pm\int_0^{\pm\infty}dy\,e^{iky}\K^\pm(y),\qquad
\tilde{\A}^\pm(k)=1\pm\int_0^{\pm\infty}dy\,e^{iky}\tilde{\K}^\pm(y).$$

Under the condition that $w=\tw=0$, let us write the scattering matrices as the
sums of a block diagonal matrix and a block off-diagonal matrix by means of
correction factors as follows:
\begin{subequations}\label{4.17}
\begin{align}
S(k)&=\left(\CT(k)-\tilde{\CC}(k)^{-1}\CR(k)\right)\CC(k),\label{4.17a}\\
\tS(k)&=\left(\tilde{\CT}(k)-\CC(k)^{-1}\tilde{\CR}(k)\right)\tilde{\CC}(k),
\label{4.17b}
\end{align}
\end{subequations}
where
\begin{align*}
\CT(k)&=\text{diag}\left(A_l(k)_{++}^{-1},\A_l^+(k),A_r(k)_{--}^{-1}\right),\\
\tilde{\CT}(k)&=\text{diag}\left(A_r(k)_{++}^{-1},\A_r^-(k),A_l(k)_{--}^{-1}
\right),
\end{align*}
contain the so-called {\it corrected transmission coefficients} and
\begin{align*}
\CC(k)&=\text{diag}\left(I_{m_+},\A_l^-(k),I_{m_-}\right),\\
\tilde{\CC}(k)&=\text{diag}\left(I_{m_+},\A_r^+(k),I_{m_-}\right),
\end{align*}
are the so-called {\it correction factors}. We call the off-diagonal matrices
$\CR(k)$ and $\tilde{\CR}(k)$ the {\it corrected reflection coefficients}. We
then arrive at the following Riemann-Hilbert problems (please note that, as
usual, the Riemann-Hilbert problems are formulated along the real axis in the
complex $k$-plane)
\begin{subequations}\label{4.18}
\begin{align}
\bXi^-(x,k)\CC(k)^{-1}&=\bXi^+(x,k)\CT(k)-\bXi^+(x,k)\tilde{\CC}(k)^{-1}\CR(k),
\label{4.18a}\\
\bXi^+(x,k)\tilde{\CC}(k)^{-1}&=\bXi^-(x,k)\tilde{\CT}(k)-\bXi^-(x,k)\CC(k)^{-1}
\tilde{\CR}(k),\label{4.18b}
\end{align}
\end{subequations}
where
\begin{subequations}\label{4.19}
\begin{align}
\bXi^+(x,k)\tilde{\CC}(k)^{-1}&=\left[I_n+\int_0^\infty d\zg\,
e^{ik\zg}\K^+(x,x+\zg)\right]e^{ikx\zS},\label{4.19a}\\
\bXi^-(x,k)\CC(k)^{-1}&=\left[I_n+\int_0^\infty d\zg\,
e^{-ik\zg}\K^-(x,x-\zg)\right]e^{ikx\zS},\label{4.19b}
\end{align}
\end{subequations}
and
$$\int_0^\infty d\zg\left(\|\K^+(x,x+\zg)\|+\|\K^-(x,x-\zg)\|\right)<+\infty.$$

To take into account the so-called norming constants when formulating the
Riemann-Hilbert problems \eqref{4.18}, we assume that the zeros of $A_r(k)_{++}$
and $A_l(k)_{--}$ in $\C^-$ are simple and nonoverlapping and that the winding
numbers $w$ and $\tw$ both vanish. Then the zeros $k_j$ of
$A_r(k)_{--}A_l(k)_{++}$ in $\C^+$ are simple and the zeros of $\tk_j$ of
$A_l(k)_{--}A_r(k)_{++}$ in $\C^+$ are simple. Moreover, the absence of spectral
singularities implies that the determinants of $A_r(k)_{++}$, $A_l(k)_{++}$,
$A_r(k)_{--}$, and $A_l(k)_{--}$ do not have any real zeros. As a result, the
reduced transmission coefficients $\CT(k)$ and $\tilde{\CT}(k)$ are continuous
in $k\in\R$, while $\tilde{\CT}(k)$ is meromorphic in $k\in\C^+$ with simple
zeros $k_j$ and $\CT(k)$ is meromorphic in $k\in\C^-$ with simple poles $\tk_j$.
Letting $\btau_j$ stand for the residue of $\tilde{\CT}(k)$ at $k=k_j$ and
$\tilde{\btau}_j$ for the residue of $\CT(k)$ at $k=\tk_j$, we define the
{\it norming constants} as those off-block-diagonal $n\times n$ matrices $\bN_j$
and $\tilde{\bN}_j$ such that
\begin{subequations}\label{4.20}
\begin{align}
\bXi^+(x,k_j)\btau_j&=+i\bXi^+(x,k_j)\tilde{\CC}(k_j)^{-1}\bN_j,\label{4.20a}\\
\bXi^-(x,\tk_j)\tilde{\btau}_j&=-i\bXi^-(x,\tk_j)\CC(\tk_j)^{-1}\tilde{\bN}_j.
\label{4.20b}
\end{align}
\end{subequations}
We can then write the Riemann-Hilbert problems \eqref{4.18} in their final form
\begin{subequations}\label{4.21}
\begin{align}
 F^-(x,k)&\CC(k)^{-1}=F^+(x,k)+\sum_j\frac{F^+(x,k)-F^+(x,k_j}{k-k_j}\btau_j
\label{4.21a}\\&+i\sum_j\frac{F^+(x,k_j)\tilde{\CC}(k_j)^{-1}\bN_j}{k-k_j}
-F^+(x,k)\tilde{\CC}(k)^{-1}e^{ikx\zS}\CR(k)e^{-ikx\zS},\nonumber\\
 F^+(x,k)&\tilde{\CC}(k)^{-1}=F^-(x,k)+\sum_j\frac{F^-(x,k)-F^-(x,\tk_j)}
{k-\tk_j}\tilde{\btau}_j\label{4.21b}\\
&-i\sum_j\frac{F^-(x,\tk_j)\CC(\tk_j)^{-1}\tilde{\bN}_j}{k-\tk_j}
-F^-(x,k)\CC(k)^{-1}e^{ikx\zS}\tilde{\CR}(k)e^{-ikx\zS}.\nonumber
\end{align}
\end{subequations}

\section{Marchenko integral equations}\label{sec:5}

In this section we convert the Riemann-Hilbert problems \eqref{4.21} into two
coupled systems of Marchenko integral equations, one satisfied by $\K^+(x,y)$
and the other satisfied by $\K^-(x,y)$. Here we assume that the zeros of
$A_l(k)_{--}$  and $A_r(k)_{++}$ in $\C^+$ are simple and nonoverlapping, that
the zeros of $A_r(k)_{--}$  and $A_l(k)_{++}$ in $\C^-$ are simple and
nonoverlapping, that these four functions as well as $S(k)_{00}$ and
$\tS(k)_{00}$ do not have any real zeros, and that the winding numbers $w$ of
$S(k)_{00}$ and $\tw$ of $\tS(k)_{00}$ both vanish.

We now observe that the functions $A_r(k)_{--}$ and $A_l(k)_{++}$ have their
entries in $\CW^+$, the functions $A_l(k)_{--}$ and $A_r(k)_{++}$ have their
entries in $\CW^-$, and the entries of the functions $F^\pm(x,k)$ belong to
$\CW^\pm$ for each $x\in\R$. Here $\CW^+$ and $\CW^-$ are subalgebras of the
Wiener algebra $\CW$ (see App.~\ref{sec:B}). Letting $\Pi^\pm$ be the
projections defined by \eqref{B.1}, we apply $\Pi^-$ to \eqref{4.21a} and
$\Pi^+$ to \eqref{4.21b} and obtain
\begin{subequations}\label{5.1}
\begin{align}
 F^-(x,k)\CC(k)^{-1}-I_n&=+i\sum_j\frac{F^+(x,k_j)\tilde{\CC}(k_j)^{-1}\bN_j}
{k-k_j}\nonumber\\&-\Pi^-\left(F^+(x,k)\tilde{\CC}(k)^{-1}e^{ikx\zS}\CR(k)
e^{-ikx\zS}\right),\label{5.1a}\\
 F^+(x,k)\tilde{\CC}(k)^{-1}-I_n&=-i\sum_j\frac{F^-(x,\tk_j)\CC(\tk_j)^{-1}
\tilde{\bN}_j}{k-\tk_j}\nonumber\\&-\Pi^+\left(F^-(x,k)\CC(k)^{-1}e^{ikx\zS}
\tilde{\CR}(k)e^{-ikx\zS}\right).\label{5.1b}
\end{align}
\end{subequations}

We now write \eqref{5.1a} as an equation in $\CW^-_0$ (definition of $\CW^-_0$
is given in App.~\ref{sec:B}) and \eqref{5.1b} as an equation in $\CW^+_0$,
strip off the Fourier transform, and end up with a coupled set of two equations
for $\K^\pm(x,x\pm\zg)$. To do so, we apply \eqref{4.19}, the identities
\begin{alignat*}{3}
\frac{-i}{k-k_j}&=\int_0^\infty ds\,e^{-i(k-k_j)s},&\qquad&k\in\C^-,\\
\frac{+i}{k-\tk_j}&=\int_0^\infty ds\,e^{i(k-\tk_j)s},&\qquad&k\in\C^+,
\end{alignat*}
and the equations
\begin{subequations}\label{5.2}
\begin{align}
e^{ikx\zS}\CR(k)e^{-ikx\zS}&=\begin{bmatrix}0&e^{ikx}\CR(k)_{12}
&e^{2ikx}\CR(k)_{13}\\e^{-ikx}\CR(k)_{21}&0&e^{ikx}\CR(k)_{23}\\
e^{-2ikx}\CR(k)_{31}&e^{-ikx}\CR(k)_{32}&0\end{bmatrix}\nonumber
\end{align}
\begin{align}
&=\int_{-\infty}^\infty d\zg\,e^{-ik\zg}\begin{bmatrix}0&\hat{\CR}(\zg+x)_{12}
&\hat{\CR}(\zg+2x)_{13}\\ \hat{\CR}(\zg-x)_{21}&0&\hat{\CR}(\zg+x)_{23}\\
\hat{\CR}(\zg-2x)_{31}&\hat{\CR}(\zg-x)_{32}&0\end{bmatrix}\nonumber\\
&=\int_{-\infty}^\infty d\zg\,e^{-ik\zg}
\left[\hat{\CR}(\zg+(\zS_i-\zS_j)x)_{ij}\right]_{i,j=1}^3,\label{5.2a}\\
e^{ikx\zS}\tilde{\CR}(k)e^{-ikx\zS}&=\begin{bmatrix}0&e^{ikx}\tilde{\CR}(k)_{12}
&e^{2ikx}\tilde{\CR}(k)_{13}\\e^{-ikx}\tilde{\CR}(k)_{21}&0
&e^{ikx}\tilde{\CR}(k)_{23}\\e^{-2ikx}\tilde{\CR}(k)_{31}
&e^{-ikx}\tilde{\CR}(k)_{32}&0\end{bmatrix}\nonumber\\
&=\int_{-\infty}^\infty d\zg\,e^{-ik\zg}\begin{bmatrix}
0&\hat{\tilde{\CR}}(\zg-x)_{12}&\hat{\tilde{\CR}}(\zg-2x)_{13}\\
\hat{\tilde{\CR}}(\zg+x)_{21}&0&\hat{\tilde{\CR}}(\zg-x)_{23}\\
\hat{\tilde{\CR}}(\zg+2x)_{31}&\hat{\tilde{\CR}}(\zg+x)_{32}&0\end{bmatrix}
\nonumber\\
&=\int_{-\infty}^\infty d\zg\,e^{ik\zg}
\left[\hat{\CR}(\zg+(\zS_j-\zS_i)x)_{ij}\right]_{i,j=1}^3,\label{5.2b}
\end{align}
\end{subequations}
where $\zS=\text{diag}(\zS_1I_{m_+},\zS_2,\zS_3I_{m_-})$ with $\zS_1=1$,
$\zS_2=0$, and $\zS_3=-1$, and the entries of $\hat{\CR}$ and
$\hat{\tilde{\CR}}$ belong to $L^1(\R)$. Consequently, we have arrived at the
Marchenko integral equations
\begin{subequations}\label{5.3}
\begin{align}
\K^-(x,x-\zg)&=-\sum_j\left(e^{ik_j\zg}\bN_j+\int_0^\infty d\ozg\,
e^{ik_j(\zg+\ozg)}\K^+(x,x+\ozg)\bN_j\right)\nonumber\\
&-\left[\hat{\CR}(\zg+(\zS_i-\zS_j)x)_{ij}\right]_{i,j=1}^3\nonumber\\
&-\int_0^\infty d\ozg\,\K^+(x,x+\ozg)
\left[\hat{\CR}(\ozg+\zg+(\zS_i-\zS_j)x)_{ij}\right]_{i,j=1}^3,\label{5.3a}\\
\K^+(x,x+\zg)&=-\sum_j\left(e^{-i\tk_j\zg}\tilde{\bN}_j+\int_0^\infty d\ozg\,
e^{-i\tk_j(\zg+\ozg)}\K^-(x,x-\ozg)\tilde{\bN}_j\right)\nonumber\\
&-\left[\hat{\tilde{\CR}}(\zg+(\zS_i-\zS_j)x)_{ij}\right]_{i,j=1}^3\nonumber\\
&-\int_0^\infty d\ozg\,\K^-(x,x-\ozg)
\left[\hat{\tilde{\CR}}(\ozg+\zg+(\zS_j-\zS_i)x)_{ij}\right]_{i,j=1}^3.
\label{5.3b}
\end{align}
\end{subequations}

By using \eqref{5.2b} it is immediate to observe that \eqref{5.3a} and
\eqref{5.3b} can be written as the respective equations
\begin{subequations}\label{5.4}
\begin{align}
\K^-(x,x-\zg)+\zo^+(\zg;x)+\int_0^\infty d\ozg\,\K^+(x,x+\ozg)\zo^+(\ozg+\zg;x)
&=0.\label{5.4a}\\
\K^+(x,x+\zg)+\zo^-(\zg;x)+\int_0^\infty d\ozg\,\K^-(x,x-\ozg)\zo^-(\ozg+\zg;x)
&=0,\label{5.4b}
\end{align}
\end{subequations}
where the Marchenko integral kernels
\begin{subequations}\label{5.5}
\begin{align}
\zo^+(\zg;x)&=\sum_j\,e^{ik_j\zg}\bN_j
+\left[\hat{\CR}(\zg+[\zS_i-\zS_j]x)_{ij}\right]_{i,j=1}^3,\label{5.5a}\\
\zo^-(\zg;x)&=\sum_j\,e^{-i\tk_j\zg}\tilde{\bN}_j
+\left[\hat{\tilde{\CR}}(\zg+[\zS_j-\zS_i]x)_{ij}\right]_{i,j=1}^3,\label{5.5b}
\end{align}
\end{subequations}
are off-block diagonal matrices whose entries belong to $L^1(\R^+)$.

Using the definitions of $F^\pm(x,k)$ we obtain
$$\begin{cases}F^+(x,k)\tilde{\CC}(k)^{-1}\be_+=M(x,k)\be_+,\\
 F^+(x,k)\tilde{\CC}(k)^{-1}\be_-=N(x,k)\be_-,\\
 F^-(x,k)\CC(k)^{-1}\be_+=N(x,k)\be_-,\\
 F^-(x,k)\CC(k)^{-1}\be_-=M(x,k)\be_-.\end{cases}$$
Consequently,
$$\begin{cases}\K^+(x,x)\be_+=K(x,x)\be_+,\quad
\K^+(x,x)\be_-=J(x,x)\be_-,\\
\K^-(x,x)\be_+=J(x,x)\be_+,\quad
\K^-(x,x)\be_-=M(x,x)\be_-,\end{cases}$$
can be expressed in the entries of the potential $\CQ(x)$ using \eqref{2.10} and
\eqref{2.11}.

Let us summarize how to evaluate the potential entries from the solutions
of the Marchenko equations \eqref{5.4}. Indeed, let us first construct the
Marchenko equations \eqref{5.4} from the norming constants and reflection
coefficients by using \eqref{5.5}. Next, we obtain $K(x,x)\be_i$ and
$J(x,x)\be_i$ ($i=1,3$) as the first and third columns of the Marchenko
solutions $\K^\pm(x,y)$ for $y=x$. Finally, we compute the potential entries
from $K(x,x)\be_i$ and $J(x,x)\be_i$ ($i=1,3$) by using \eqref{2.10} and
\eqref{2.11}.

\section{Time evolution of the scattering data}\label{sec:6}

In this section we study the time evolution of the scattering data of the AKNS
system \eqref{1.1} with $\zS=\text{diag}(I_{m_+},0,-I_{m_-})$ associated with
the solution of the generalized long-wave-short-wave equations \eqref{1.3} by
means of the inverse scattering transform.

Let $\bV(x,t)$ be a nonsingular $n\times n$ matrix solution of the first order
system
\begin{equation}\label{6.1}
\bV_t=\bX\bV,\qquad\bV_t=\bT\bV,
\end{equation}
where $(\bX,\bT)$ is the Lax pair given by $\bX=ik\zS+\CQ$ and
$\bT=(ik)^2A-ikB+C$ with
\begin{subequations}\label{6.2}
\begin{align}
A&=\frac{i}{n}\text{diag}(I_{m_+},1-n,I_{m_-}),\label{6.2a}\\
\CQ&=\begin{bmatrix}0_{m_+\times m_+}&S&iL\\T^\dagger&0&S^\dagger\\iK&T
&0_{m_-\times m_-}\end{bmatrix},\label{6.2b}\\
B&=\begin{bmatrix}0_{m_+\times m_+}&iS&0_{m_+\times m_-}\\iT^\dagger&0
&-iS^\dagger\\0_{m_-\times m_+}&-iT&0_{m_-\times m_-}\end{bmatrix},
\label{6.2c}\\
C&=\begin{bmatrix}-iST^\dagger&iS_x-LT&iSS^\dagger\\-iT_x^\dagger-S^\dagger K
&i[T^\dagger S+S^\dagger T]&-iS_x^\dagger-T^\dagger L\\iII^\dagger&iT_x-KS
&-iTS^\dagger\end{bmatrix}.\label{6.2d}
\end{align}
\end{subequations}
Then $\bV(x,t)$ can be postmultiplied in \eqref{6.1} by any nonsingular
$n\times n$ matrix independent of $(x,t)\in\R^2$. For any invertible matrix
solution $W(x,t)$ of \eqref{1.1} there exists an invertible $n\times n$ matrix
$C_W$, not depending on $x$, such that
$$W=\bV C_W^{-1}.$$
Then
\begin{align*}
W_t&=\bV_tC_W^{-1}-\bV C_W^{-1}[C_W]_tC_W^{-1}=\bT\bV C_W^{-1}-\bV C_W^{-1}
[C_W]_tC_W^{-1}\\&=\bT W-W[C_W]_tC_W^{-1}
\end{align*}
implies the important identity
\begin{equation}\label{6.3}
[C_W]_tC_W^{-1}=W^{-1}\bT W-W^{-1}W_t,
\end{equation}
where the left-hand side (and hence also the right-hand side) does not depend on
$x$. We may therefore exploit the asymptotic behavior of $W$ and $\bT$ as
$x\to\pm\infty$ to write the right-hand side of \eqref{6.2} in a simplified way.
Using that $\zS$ and $A$ commute, we thus get as $x\to\pm\infty$
\begin{subequations}\label{6.4}
\begin{align}
[C_\Psi]_tC_\Psi^{-1}&=e^{-ikx\zS}[I_n+0(1)][(ik)^2A+o(1)]e^{ikx\zS}[I_n+o(1)]
=-k^2A,\label{6.4a}\\
[C_\Phi]_tC_\Phi^{-1}&=e^{-ikx\zS}[I_n+0(1)][(ik)^2A+o(1)]e^{ikx\zS}[I_n+o(1)]
=-k^2A.\label{6.4b}
\end{align}
\end{subequations} 
Hence, $C_\Psi(k;t)=e^{-k^2tA}C_\Psi(k;0)$ and
$C_\Phi(k;t)=e^{-k^2tA}C_\Phi(k;0)$.

Recalling the definitions \eqref{4.1} of the transition matrices
$A_l=\Phi^{-1}\Psi$ and $A_r=\Psi^{-1}\Phi$, we compute
\begin{align*}
[A_l]_t&=\Phi^{-1}\Psi_t-\Phi^{-1}\Phi_t\Phi^{-1}\Psi\\
&=\Phi^{-1}\Psi_t-\Phi^{-1}\Phi_tA_l\\
&=\Phi^{-1}\left(\bT\Psi-\Psi[C_\Psi]_tC_\Psi^{-1}\right)
-\Phi^{-1}\left(\bT\Phi-\Phi[C_\Phi]_tC_\Phi^{-1}\right)A_l\\
&=\Phi^{-1}\bT\Psi-A_l[C_\Psi]_tC_\Psi^{-1}
-\Phi^{-1}\bT A_l+[C_\Phi]_tC_\Phi^{-1}A_l\\
&=[C_\Phi]_tC_\Phi^{-1}A_l-A_l[C_\Psi]_tC_\Psi^{-1}=k^2(A_l(k)A-AA_l(k)).
\end{align*}
Consequently, using that $A_r(k;t)=A_l(k;t)^{-1}$ we obtain
\begin{equation}\label{6.5}
A_l(k;t)=e^{-k^2tA}A_l(k;0)e^{k^2tA},\qquad
A_r(k;t)=e^{-k^2tA}A_r(k;0)e^{k^2tA}.
\end{equation}
Since $A=\frac{i}{n}\text{diag}(I_{m_+},1-n,I_{m_-})$, we see that the diagonal
and corner blocks of $A_l(k;t)$ and $A_r(k;t)$ are time independent whereas the
remaining four blocks are not. More precisely,
\begin{equation}\label{6.6}
A_{l,r}(k;t)=\begin{bmatrix}A_{l,r}(k;0)_{++}&e^{-ik^2t}A_{l,r}(k;0)_{+0}
&A_{l,r}(k;0)_{+-}\\e^{ik^2t}A_{l,r}(k;0)_{0+}&A_{l,r}(k;0)_{00}
&e^{ik^2t}A_{l,r}(k;0)_{0-}\\A_{l,r}(k;0)_{-+}&e^{-ik^2t}A_{l,r}(k;0)_{-0}
&A_{l,r}(k;0)_{--}\end{bmatrix}.
\end{equation}
With the help of \eqref{4.12} we obtain for the scattering matrices
\begin{subequations}\label{6.7}
\begin{align}
S_{l,r}(k;t)=\begin{bmatrix}S_{l,r}(k;0)_{++}&e^{-ik^2t}S_{l,r}(k;0)_{+0}
&S_{l,r}(k;0)_{+-}\\e^{ik^2t}S_{l,r}(k;0)_{0+}&S_{l,r}(k;0)_{00}
&e^{ik^2t}S_{l,r}(k;0)_{0-}\\S_{l,r}(k;0)_{-+}&e^{-ik^2t}S_{l,r}(k;0)_{-0}
&S_{l,r}(k;0)_{--}\end{bmatrix},\label{6.7a}\\
\tS_{l,r}(k;t)=\begin{bmatrix}\tS_{l,r}(k;0)_{++}&e^{-ik^2t}\tS_{l,r}(k;0)_{+0}
&\tS_{l,r}(k;0)_{+-}\\e^{ik^2t}\tS_{l,r}(k;0)_{0+}&\tS_{l,r}(k;0)_{00}
&e^{ik^2t}\tS_{l,r}(k;0)_{0-}\\\tS_{l,r}(k;0)_{-+}&e^{-ik^2t}\tS_{l,r}(k;0)_{-0}
&\tS_{l,r}(k;0)_{--}\end{bmatrix}.\label{6.7b}
\end{align}
\end{subequations}
As a result, in the absence of spectral singularities the Wiener-Hopf factors
$\A_{l,r}^\pm(k)$ in \eqref{4.16} are time independent as are the corrected
transmission coefficients $\CT(k)$ and $\tilde{\CT}(k)$ and the correction
factors $\CC(k)$ and $\tilde{\CC}(k)$. The corrected reflection coefficients
$\CR(k;t)$ and $\tilde{\CR}(k;t)$ are time varying and satisfy
$$\CR(k;t)=e^{-k^2tA}\CR(k;0)e^{k^2tA},\qquad
\tilde{\CR}(k;t)=e^{-k^2tA}\tilde{\CR}(k;0)e^{k^2tA}.$$

Let us now derive the time evolution of the norming constants $\bN_j$ and
$\tilde{\bN}_j$ under the assumption that the zeros $k_j$ of $A_l(k)_{++}$ and
$A_r(k)_{--}$ in $\C^+$ are simple and nonoverlapping, the zeros $\tilde{k}_j$
of $A_r(k)_{++}$ and $A_l(k)_{--}$ in $\C^-$ are simple and nonoverlapping, and
the winding numbers $w$ and $\tw$ both vanish. We now recall that $\bXi^+(x,k)$
and $\bXi^-(x,k)$ are $n\times n$ matrix solutions of \eqref{1.1} satisfying the
asymptotic conditions [cf. \eqref{4.8}]
\begin{subequations}\label{6.8}
\begin{align}
e^{-ikx\zS}\bXi^+(x,k;t)&\simeq\begin{cases}\ClD^\up_+(k;t),&x\to+\infty,\\
\ClD^\dn_+(k;t),&x\to-\infty,\end{cases}\label{6.8a}\\
e^{-ikx\zS}\bXi^-(x,k;t)&\simeq\begin{cases}\ClD^\dn_-(k;t),&x\to+\infty,\\
\ClD^\up_-(k;t),&x\to-\infty,\end{cases}\label{6.8b}
\end{align}
\end{subequations}
where the four matrices $\ClD^\up_\pm(l;t)$ and $\ClD^\dn_\pm(k;t)$ satisfy the
time evolutions
\begin{equation}\label{6.9}
\ClD^\up_\pm(k;t)=e^{-k^2tA}\ClD^\up_\pm(k;0)e^{k^2tA},\quad
\ClD^\dn_\pm(k;t)=e^{-k^2tA}\ClD^\dn_\pm(k;0)e^{k^2tA}.
\end{equation}
Hence,
\begin{subequations}\label{6.10}
\begin{align}
\bXi^+(x,k;t)&=\Psi(x,k;t)\ClD^\up_+(k;t)=\Phi(x,k;t)\ClD^\dn_+(k;t),
\label{6.10a}\\
\bXi^-(x,k;t)&=\Psi(x,k;t)\ClD^\dn_-(k;t)=\Phi(x,k;t)\ClD^\up_-(k;t).
\label{6.10b}
\end{align}
\end{subequations}
Moreover [cf. \eqref{4.12}],
\begin{subequations}\label{6.11}
\begin{align}
S(k;t)&=\ClD^\up_+(k;t)^{-1}\ClD^\dn_-(k;t)=\ClD^\dn_+(k;t)^{-1}\ClD^\up_-(k;x),
\label{6.11a}\\
\tilde{S}(k;t)&=\ClD^\up_-(k;t)^{-1}\ClD^\dn_+(k;t)
=\ClD^\dn_-(k;t)^{-1}\ClD^\up_+(k;x).\label{6.11b}
\end{align}
\end{subequations}
The time evolutions \eqref{6.7} are immediate from \eqref{6.9} and \eqref{6.10}.
 Finally, using \eqref{6.9} we obtain
\begin{subequations}\label{6.12}
\begin{align}
&[C_{\Xi^+}(k;t)]_tC_{\Xi^+}(k;t)^{-1}=\ClD^\up_+(k;t)^{-1}
\left(-k^2A\ClD^\up_+(k;t)-[\ClD^\up_+(k;t)]_t\right)\nonumber\\
&=e^{-k^2tA}\ClD^\up_+(k;0)^{-1}e^{k^2tA}\left(-k^2Ae^{-k^2tA}
\ClD^\up_+(k;0)e^{k^2tA}\right.\nonumber\\&-\left.e^{-k^2tA}
\left[-k^2A\ClD^\up_+(k;0)+\ClD^\up_+(k;0)k^2A\right]e^{k^2tA}\right)\nonumber\\
&=e^{-k^2tA}\ClD^\up_+(k;0)^{-1}\left(-k^2A\ClD^\up_+(k;0)+k^2A\ClD^\up_+(k;0)
-\ClD^\up_+(k;0)k^2A\right)e^{k^2tA}\nonumber\\
&=-e^{-k^2tA}\ClD^\up_+(k;0)^{-1}\ClD^\up_+(k;0)k^2Ae^{k^2tA}\nonumber\\
&=-e^{-k^2tA}(k^2A)e^{k^2tA}=-k^2A.\label{6.12a}
\end{align}
Similarly,
\begin{align}
[C_{\Xi^-}(k;t)]_tC_{\Xi^-}(k;t)^{-1}&=-k^2A.\label{6.12b}
\end{align}
\end{subequations}

Letting $\btau_j$ stand for the residue of $\tilde{\CT}(k)$ at $k=k_j$ and
$\tilde{\btau}_j$ for the residue of $\CT(k)$ at $k=\tilde{k}_j$, we define the
{\it norming constants} as those off-diagonal $3\times3$ matrices $\bN_j(t)$ and
$\tilde{\bN}_j(t)$ such that
\begin{subequations}\label{6.13}
\begin{align}
\bXi^+(x,k_j;t)\btau_j&=+i\,\bXi^+(x,k_j;t)\tilde{\CC}(k_j)^{-1}\bN_j(t),
\label{6.13a}\\
\bXi^-(x,\tilde{k}_j;t)\tilde{\btau}_j
&=-i\,\bXi^-(x,\tilde{k}_j;t)\CC(\tilde{k}_j)^{-1}\tilde{\bN}_j(t),\label{6.13b}
\end{align}
\end{subequations}
where $\btau_j$ and $\tilde{\btau}_j$ are time independent but the norming
constants are not.

Using \eqref{6.12a}, $[\bXi^+]_t=\bT\bXi^+-\bXi^+[C_{\bXi^+}]_tC_{\bXi^+}^{-1}$,
and the time independence of $\btau_j$ we get
\begin{align*}
&\left(\bT(k_j)\bXi^+(x,k_j;t)-\bXi^+(x,k_j;t)(-k_j^2A)\right)\btau_j
=i\left(\bT(k_j)\bXi^+(x,k_j;t)\right.\\&-\left.\bXi^+(x,k_j;t)(-k_j^2A)\right)
\tilde{\CC}(k_j)^{-1}\bN_j(t)+i\,\bXi^+(x,k_j;t)\tilde{\CC}(k_j)^{-1}[\bN_j]_t,
\end{align*}
which on deleting the (identical) first terms on either side leads to the
identity
$$\bXi^+(x,k_j;t)k_j^2A\btau_j=i\,\bXi^+(x,k_j;t)k_j^2A\tilde{\CC}(k_j)^{-1}
\bN_j(t)+i\,\bXi^+(x,k_j;t)\tilde{\CC}(k_j)^{-1}[\bN_j]_t,$$
thus implying
$$\bXi^+(x,k_j;t)\btau_jk_j^2A
=i\,\bXi^+(x,k_j;t)\tilde{\CC}(k_j)^{-1}k_j^2A\bN_j(t)
+i\,\bXi^+(x,k_j;t)\tilde{\CC}(k_j)^{-1}[\bN_j]_t.$$
With the help of \eqref{6.13a} we obtain in turn
\begin{align*}
i\,\bXi^+(x,k_j;t)\tilde{\CC}(k_j)^{-1}\bN_j(t)k_j^2A
&=i\,\bXi^+(x,k_j;t)\tilde{\CC}(k_j)^{-1}k_j^2A\bN_j(t)\\
&+i\,\bXi^+(x,k_j;t)\tilde{\CC}(k_j)^{-1}[\bN_j]_t.
\end{align*}
Therefore,
$$\bN_j(t)k_j^2A=k_j^2A\bN_j(t)+[\bN_j]_t,$$
and hence
$$[\bN_j]_t=\bN_j(t)k_j^2A-k_j^2A\bN_j(t).$$
Consequently, we have derived the time evolution of the norming constants
\begin{subequations}\label{6.14}
\begin{align}
\bN_j(t)&=e^{-k_j^2tA}\bN_j(0)e^{k^2tA}.\label{6.14a}\\
\intertext{In the same way we derive from \eqref{6.13b}}
\tilde{\bN}_j(t)&=e^{-\tilde{k}_j^2tA}\tilde{\bN}_j(0)e^{\tilde{k}_j^2tA}
\label{6.14b}
\end{align}
\end{subequations}

\medskip
{\bf Data availability statement:} In this article no numerical experiments were
conducted.

{\bf Conflicts of interest:} The author declares no conflict of interest.

{\bf Research funding:} The research leading to this aricle was conducted
without any resarch funding.

\appendix

\section{Linear equations for wedge products}\label{sec:A}

The exterior algebra of $\R^n$ is defined as the graded associate algebra
$$\zL(\R^n)=\bigoplus_{k=0}^n\,\zL^k(\R^n),$$
where $\zL^0(\R^n)=\R$, $\zL^1(\R^n)=\R^n$, and $\zL^k(\R^n)$ consists of the
finite linear combinations of all elements $\bv^1\wedge\ldots\wedge\bv^k$ for
$\bv^j\in\R^n$ ($j=1,2,\ldots,k$) (See \cite{MT}). Here the wedge product
$\wedge$ is multilinear, associative, and antisymmetric. The canonical basis of
$\zL^k(\R^n)$ is the set of $\binom{n}{k}$ entries
$$\left\{\be_{r_1}\wedge\ldots\wedge\be_{r_k}\right\}_{r_1<\ldots<r_k},$$
where $\{\be_1,\ldots,\be_n\}$ is the canonical basis of $\R^n$. Then
$\zL^{n-1}(\R^n)$ is an $n$-dimensional real vector space with bases
$\{\bd_1,\ldots,\bd_n\}$ and $\{\bg_1,\ldots,\bg_n\}$ related by the
equations \cite{MT}
\begin{equation}\label{A.1}
\bd_1=\bg_1,\qquad\bd_j=(-1)^{(j-1)(n-j)}\bg_j,\quad j=2,\ldots,n-1,
\qquad\bd_n=\bg_n,
\end{equation}
where, for $\bza_j=\be_1\wedge\ldots\wedge\be_{j-1}$ and
$\bzb_j=\be_{j+1}\wedge\ldots\wedge\be_n$, we have
\begin{align*}
\bd_j&=\bzb_j\wedge\bza_j
=\be_{j+1}\wedge\ldots\wedge\be_n\wedge\be_1\wedge\ldots\wedge\be_{j-1},\\
\bg_j&=\bza_j\wedge\bzb_j=\be_1\wedge\ldots\wedge\xcancel{\be_j}\wedge\ldots
\wedge\be_n.
\end{align*}
 For $n=3$ we can identify the wedge product with the vector product in $\R^3$
and obtain $\bg_j\simeq\be_j$ ($j=1,2,3$).

Writing $A=e^{xB}$ for $x\in\R$ and $B:\R^n\to\R^n$
another linear transformation, then the mapping $B^{(n)}_k$ defined by
\begin{align*}
&B^{(n)}_k(\bv^1\wedge\ldots\wedge\bv^k)
=\left.\frac{d}{dx}e^{xB}\bv^1\wedge\ldots\wedge e^{xB}\bv^k\right|_{x=0}\\
&=\left.\frac{d}{dx}(I_n+xB)\bv^1\wedge\ldots\wedge(I_n+xB)\bv^k\right|_{x=0}\\
&=\sum_{j=1}^k\,(I_n+xB)\bv_1\wedge\ldots\wedge(I_n+xB)\bv^{j-1}\wedge
B\bv^j\\ &\phantom{\sum_{j=1}^k\sum_{j=1}^k}\left.\wedge(I_n+xB)\bv^{j+1}\wedge
\ldots\wedge(I_n+xB)\bv^k\right|_{x=0}\\
&=\sum_{j=1}^k\,\bv^1\wedge\ldots\bv^{j-1}\wedge B\bv^j\wedge\bv^{j+1}\wedge
\ldots\wedge\bv^k
\end{align*}
is a linear transformation on $\zL^k(\R^n)$ and hence can be represented as
an $\binom{n}{k}\times\binom{n}{k}$ matrix with respect to a suitable basis. In
the particular case $k=n-1$ and with respect to the basis
$\{\bd_1,\ldots,\bd_n\}$ we get $A^{(n)}_{n-1}=\text{cofac}(A^T)$, the transpose
cofactor matrix of $A$. For the nonsingular matrix $A=e^{xB}$ we get
\begin{equation}\label{A.2}
B^{(n)}_{n-1}=\left.\frac{d}{dx}(\det e^{xB})e^{-xB^T}\right|_{x=0}
=(\text{Tr}\,B)I_n-B^T.
\end{equation}

Let us work out the wedge products
\begin{subequations}\label{A.3}
\begin{align}
(-1)^{m_+m_-}&\left(W^T\be_1\wedge\ldots\wedge W^T\be_{m_+}\wedge\be_{n+1-m_-}
\wedge\ldots\wedge\be_n\right)\nonumber\\&=\sum_{j=1}^{m_++1}(-1)^{j+m_+m_-}
\begin{vmatrix}W_{1,1}&\ldots&W_{m_+,1}\\ \vdots&&\vdots\\
\xcancel{W_{1,j}}&\ldots&\xcancel{W_{m_+,j}}\\ \vdots&&\vdots\\
W_{1,m_++1}&\ldots&W_{m_+,m_++1}\end{vmatrix}\bd_j,\label{A.3a}
\end{align}
\begin{align}
(-1)^{m_+m_-}&\left(\be_1\wedge\ldots\wedge\be_{m_+}\wedge W^T\be_{n+1-m_-}
\wedge\ldots\wedge W^T\be_n\right)\nonumber\\&=\sum_{j=m_++1}^n(-1)^{j+m_+m_-}
\begin{vmatrix}W_{n+1-m_-,m_++1}&\ldots&W_{n,m_++1}\\ \vdots&&\vdots\\
\xcancel{W_{n+1-m_-,j}}&\ldots&\xcancel{W_{n,j}}\\ \vdots&&\vdots\\
W_{n+1-m_-,n}&\ldots&W_{n,n}\end{vmatrix}\bd_j,\label{A.3b}
\end{align}
\end{subequations}
as linear combinations of $\bd_1,\ldots,\bd_n$, where $W^T=e^{ikx\bN_-}Z^T$ in
\eqref{A.3a} and $W^T=e^{-ik\bN_+}Z^T$ in \eqref{A.3b} with $Z^T$ independent of
$x\in\R$.

Singling out the coefficient of $\bd_{m_++1}$ we obtain
\begin{subequations}\label{A.4}
\begin{align}
\ClD_+^\up(k)_{00}&=-\det A_l(k)_{++}
=-\begin{vmatrix}A_l(k)_{1,1}&\ldots&A_l(k)_{1,m_+}\\ \vdots&&\vdots\\
A_l(k)_{m_+,1}&\ldots&A_l(k)_{m_+,m_+}\end{vmatrix},\label{A.4a}\\
\ClD_+^\dn(k)_{00}&=-\det A_r(k)_{--}
=-\begin{vmatrix}A_r(k)_{n+1-m_-,n+1-m_-}&\ldots&A_r(k)_{n+1-m_-,n}\\
\vdots&&\vdots\\ A_r(k)_{n,n+1-m_-}&\ldots&A_r(k)_{n,n}\end{vmatrix},
\label{A.4b}\\
\ClD_-^\dn(k)_{00}&=-\det A_l(k)_{--}
=-\begin{vmatrix}A_l(k)_{n+1-m_-,n+1-m_-}&\ldots&A_l(k)_{n+1-m_-,n}\\
\vdots&&\vdots\\ A_l(k)_{n,n+1-m_-}&\ldots&A_l(k)_{n,n}\end{vmatrix},
\label{A.4c}\\
\ClD_-^\up(k)_{00}&=-\det A_r(k)_{++}
=-\begin{vmatrix}A_r(k)_{1,1}&\ldots&A_r(k)_{1,m_+}\\ \vdots&&\vdots\\
A_r(k)_{m_+,1}&\ldots&A_r(k)_{m_+,m_+}\end{vmatrix},\label{A.4d}
\end{align}
\end{subequations}
where the factors $e^{ikx\bN_-}$ and $e^{-ikx\bN_+}$ do not affect the result.

Identifying $\bd_j$ with $\be_j$ we obtain
\begin{subequations}\label{A.5}
\begin{align}
&\be_j^T\ClD_+^\up(k)_{+0}=(-1)^{j+m_+m_-}e^{ikx}\times\nonumber\\
&\times\begin{vmatrix}
A_l(k)_{1,1}&\ldots&\xcancel{A_l(k)_{1,j}}&\ldots&A_l(k)_{1,m_+}
&A_l(k)_{1,m_++1}\\ \vdots&\vdots&&\vdots&\vdots\\ A_l(k)_{m_+,1}&\ldots
&\xcancel{A_l(k)_{m_+,j}}&\ldots&A_l(k)_{m_+,m_+}&A_l(k)_{m_+,m_++1}
\end{vmatrix},\label{A.5a}\\
&\be_j^T\ClD_-^\up(k)_{+0}=(-1)^{j+m_+m_-}e^{ikx}\times\nonumber\\
&\times\begin{vmatrix}
A_r(k)_{1,1}&\ldots&\xcancel{A_r(k)_{1,j}}&\ldots&A_r(k)_{1,m_+}
&A_r(k)_{1,m_++1}\\ \vdots&\vdots&&\vdots&\vdots\\ A_r(k)_{m_+,1}&\ldots
&\xcancel{A_r(k)_{m_+,j}}&\ldots&A_r(k)_{m_+,m_+}&A_r(k)_{m_+,m_++1}
\end{vmatrix},\label{A.5b}\\
\intertext{for $j=1,\ldots,m_+$, and}
&\be_j^T\ClD_-^\dn(k)_{-0}=(-1)^{j+m_+m_-}e^{-ikx}\times\nonumber\\
&\times\begin{vmatrix}
A_r(k)_{n+1-m_-,m_++1}&A_r(k)_{n+1-m_-,n+1-m_-}&\ldots
&\xcancel{A_r(k)_{n+1-m_-,j}}&\ldots&A_r(k)_{n+1-m_-,n}\\
\vdots&\vdots&&\vdots&&\vdots\\ A_r(k)_{n,m_++1}&A_r(k)_{n,n+1-m_-}&\ldots
&\xcancel{A_r(k)_{n,j}}&\ldots&A_r(k)_{n,n}
\end{vmatrix},\label{A.5c}\\
&\be_j^T\ClD_+^\dn(k)_{-0}=(-1)^{j+m_+m_-}e^{-ikx}\times\nonumber\\
&\times\begin{vmatrix}
A_l(k)_{n+1-m_-,m_++1}&A_l(k)_{n+1-m_-,n+1-m_-}&\ldots
&\xcancel{A_l(k)_{n+1-m_-,j}}&\ldots&A_l(k)_{n+1-m_-,n}\\
\vdots&\vdots&&\vdots&&\vdots\\ A_l(k)_{n,m_++1}&A_l(k)_{n,n+1-m_-}&\ldots
&\xcancel{A_l(k)_{n,j}}&\ldots&A_l(k)_{n,n}
\end{vmatrix},\label{A.5d}
\end{align}
\end{subequations}
for $j=n+1-m_-,\ldots,n$.

\section{Wiener algebras}\label{sec:B}

By the (continuous) Wiener algebra $\CW$ we mean the complex vector space of
constants plus Fourier transforms of $L^1$-functions
$$\CW=\{c+\hat{h}:c\in\C,\ h\in L^1(\R)\}$$
endowed with the norm $|c|+\|h\|_1$. Here we define the Fourier transform as
follows: $(\CF h)(k)=\hat{h}(k)=\int_{-\infty}^\infty dy\,e^{iky}h(y)$.
The invertible elements of the commutative Banach algebra $\CW$ with unit
element are exactly those $c+\hat{h}\in\CW$ for which $c\neq0$ and
$c+\hat{h}(k)\neq0$ for each $k\in\R$ \cite{Gel,GRS,Kn}.

The algebra $\CW$ has the two closed subalgebras $\CW^+$ and $\CW^-$ consisting
of those $c+\hat{h}$ such that $h$ is supported on $\R^+$ and $\R^-$,
respectively. The invertible elements of $\CW^\pm$ are exactly those
$c+\hat{h}\in\CW^\pm$ for which $c\neq0$ and $c+\hat{h}(k)\neq0$ for each
$k\in\C^\pm\cup\R$ \cite{Gel,GRS,Kn}. Letting $\CW^\pm_0$ and $\CW_0$ stand for
the (nonunital) closed subalgebras of $\CW^\pm$ and $\CW$ consisting of those
$c+\hat{h}$ for which $c=0$, we obtain the direct sum decompositions
$$\CW=\C\oplus\CW^+_0\oplus\CW^-_0,\qquad\CW_0=\CW^+_0\oplus\CW^-_0.$$

By $\Pi_\pm$ we now denote the (bounded) projections of $\CW$ onto $\CW^\pm_0$
along $\C\oplus\CW^\mp_0$. Then $\Pi_+$ and $\Pi_-$ are complementary
projections. In fact,
\begin{equation}\label{B.1}
(\Pi_\pm f)(k)=\frac{1}{2\pi i}\int_{-\infty}^\infty d\zz\,
\frac{f(\zz)}{\zz-(k\pm i0^+)},
\end{equation}
where $f\in\CW_0\cap L^p(\R)$ for some $p\in(1,+\infty)$. These direct sum
decompositions coupled with the Fourier transform can be schematically represented
as follows:
$$\begin{array}{ccccc}
L^1(\R)&=&L^1(\R^-)&\oplus&L^1(\R^+)\\
\Big\downarrow\vcenter{\rlap{$\CF$}}&&\Big\downarrow\vcenter{\rlap{$\CF$}}&&
\Big\downarrow\vcenter{\rlap{$\CF$}}\\
\CW_0&=&\CW^-_0&\oplus&\CW^+_0
\end{array}$$

Now observe that $\CF$ acts as an isometric linear one-to-one correspondence
from $L^1(\R)$ onto $\CW_0$. If we define the norm of $\C\oplus L^1(\R)$ as
$\|c+h\|=|c|+\|h\|_1$, we obtain the direct sum decomposition
$$L^1(\R)=L^1(\R^+)\oplus L^1(\R^-),$$
where the projection $\CF^{-1}\Pi_\pm\CF$ is the restriction of an arbitrary
$h\in L^1(\R)$ to the half-line $\R^\pm$.

The following result is most easily proved using the Gelfand theory of
commutative Banach algebras \cite{Gel,GRS,Kn}.

\begin{theorem}\label{th:B.1}
If for some complex number $h_\infty$ and some $h\in L^1(\R)$ the Fourier
transform $h_\infty+\int_{-\infty}^\infty dz\,e^{ikz}h(z)\neq0$ for every
$k\in\R$ and if $h_\infty\neq0$, then there exists $\zk\in L^1(\R)$ such that
$$\frac{1}{h_\infty+\int_{-\infty}^\infty dz\,e^{ikz}h(z)}
=\frac{1}{h_\infty}+\int_{-\infty}^\infty dz\,e^{ikz}\zk(z)$$
for every $k\in\R$.
\end{theorem}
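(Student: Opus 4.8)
The plan is to recognize the statement as the assertion that $f=h_\infty+\hat h$ is an invertible element of the unital commutative Banach algebra $\CW$, and to deduce this from the Gelfand theory of commutative Banach algebras. First I would record that $\CW$ is commutative with unit $1$ (the element with $c=1$, $h=0$), and that under the Fourier transform $\CF$ the non-unital ideal $\CW_0$ is carried isomorphically onto the convolution algebra $(L^1(\R),*)$: indeed, multiplication in $\CW_0$ is the pointwise product of Fourier transforms, and $\widehat{g*h}=\hat g\,\hat h$, so $\CF$ intertwines convolution with this product.

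Next I would identify the maximal ideal space, i.e.\ the space of characters (nonzero multiplicative linear functionals) of $\CW$. The claim is that it is the one-point compactification $\R\cup\{\infty\}$: for each $k\in\R$ the evaluation $\phi_k(c+\hat h)=c+\hat h(k)$ is a character, as is the functional at infinity $\phi_\infty(c+\hat h)=c$, and there are no others. Multiplicativity of each $\phi_k$ and of $\phi_\infty$ is immediate from $\widehat{g*h}=\hat g\,\hat h$. For the converse, any character $\chi$ of $\CW$ restricts to a multiplicative functional on $\CW_0\cong(L^1(\R),*)$; if this restriction is nonzero it is a character of the group algebra $L^1(\R)$, hence equals $\hat h\mapsto\hat h(k)$ for a unique $k\in\R$ and so $\chi=\phi_k$, whereas if the restriction vanishes on $\CW_0$ then $\chi$ is pinned down by $\chi(1)=1$, giving $\chi=\phi_\infty$.

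With this structure in hand, the Gelfand transform of $f=h_\infty+\hat h$ is the function on $\R\cup\{\infty\}$ equal to $f(k)$ at each $k\in\R$ and to $h_\infty$ at $\infty$. By the Gelfand invertibility criterion, $f$ is invertible in $\CW$ exactly when this transform vanishes nowhere, which is precisely the pair of hypotheses $f(k)\neq0$ for all $k\in\R$ and $h_\infty\neq0$. Hence there are $c'\in\C$ and $\zk\in L^1(\R)$ with $1/f=c'+\hat{\zk}$; applying $\phi_\infty$ to $f\cdot(1/f)=1$ yields $h_\infty c'=1$, so $c'=1/h_\infty$, which is exactly the asserted representation.

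The main obstacle is the identification of the maximal ideal space, and within it the nontrivial direction that every character is one of the $\phi_k$ or equals $\phi_\infty$. This reduces to the classical fact that the Gelfand spectrum of the convolution algebra $L^1(\R)$ is $\R$ itself, realized through the Fourier transform; once that is invoked, the passage to the unitalization $\CW$ and the bookkeeping of the functional at infinity are routine.
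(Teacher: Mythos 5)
Your proof is correct and follows exactly the route the paper indicates: the paper gives no written proof but states that the result "is most easily proved using the Gelfand theory of commutative Banach algebras" with references, and your argument — identifying the maximal ideal space of $\CW$ with $\R\cup\{\infty\}$, applying the Gelfand invertibility criterion, and evaluating at the character at infinity to pin down the constant $1/h_\infty$ — is the standard fleshed-out version of that citation (essentially Wiener's $1/f$ theorem for the unitalized algebra).
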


\medskip
{\bf Disclaimer/Publisher's Note:} The statements, opinions, and data contained
in all publications are solely those of the individual author(s) and
contributor(s) and not of MDPI and/or editor(s). MDPI and or the editor(s)
disclaim responsability for any injurys to people or property from any ideas,
methods, instructions or products referred to in the content.


\begin{thebibliography}{WW}
\bibitem{AC} M.J. Ablowitz and P.A. Clarkson, {\it Solitons, Nonlinear
Evolution Equations and Inverse Scattering}, Cambridge University Press,
Cambridge, 1991.
\bibitem{AKNS} M.J. Ablowitz, D.J. Kaup, A.C. Newell, and H. Segur, {\it The
inverse scattering transform -- Fourier analysis for nonlinear problems},
Stud. Appl. Math. {\bf 53}, 249--315 (1974).
\bibitem{APT} M.J. Ablowitz, B. Prinari, and A.D. Trubatch, {\it Discrete and
Continuous Nonlinear Schr\"o\-din\-ger Systems}, Cambridge University Press,
Cambridge, 2004.
\bibitem{AS} M.J. Ablowitz and H. Segur, {\it Solitons and the Inverse
Scattering Transform}, SIAM, Philadelphia, 1981.
\bibitem{BC1} R. Beals and R.R. Coifman, {\it Scattering and inverse
scattering for first order systems}, Commun. Pure Appl. Math. {\bf 37}(1),
39-90 (1984).
\bibitem{BC2} R. Beals and R.R. Coifman, {\it Inverse scattering and evolution
equations}, Commun. Pure Appl. Math. {\bf 38}(1), 29--42 (1985).
\bibitem{BDT} R. Beals, P. Deift, and C. Tomei, {\it Direct and Inverse
Scattering on the Line}, Math. Surveys {\bf 28}, Amer. Math. Soc., Providence,
RI, 1988.
\bibitem{CD} F. Calogero and A. Degasperis, {\it Spectral Transforms and
Solitons}, North-Holland, Amsterdam, 1982.
\bibitem{YON2} M. Caso-Huerta, A. Degasperis, P. Leal da Silva, S. Lombardo, and
M. Sommacal, {\it Periodic and solitary wave solutions of the long-short-wave
Yajima-Oikawa-Newell model}, Fluids {\bf 7}, 227 (2022), 15~pp.
\bibitem{YON} M Caso-Huerta, A. Degasperis, S. Lombardo, and M. Sommacal,
{\it A new integrable model of long wave-short wave interaction and linear
stability spectra}, Proc. Roy. Soc. London A {\bf 477}(2252), 20210408 (2021).
\bibitem{DTT} P. Deift, C. Tomei, and E. Trubowitz, {\it Inverse scattering and
the Boussinesq equation}, Commun. Pure Appl. Math. {\bf 35}(5), 567--628 (1982).
\bibitem{DM24} F. Demontis and C. van der Mee, {\it Three-way Zakharov-Shabat
systems with zero diagonal entry}, Ricerche di Matematica {\bf 2025}, 100206
(2025).
\bibitem{FT} L.D. Faddeev and L.A. Takhtajan, {\it Hamiltonian Methods in the
Theory of Solitons}, Classics in Mathematics, Springer, New York, 1987; also:
Nauka, Moscow, 1986 [Russian].
\bibitem{Gel} I.M. Gelfand, {\it Normierte Ringe}, Mat. Sbornik {\bf 9}, 3--24
(1941).
\bibitem{GRS} I.M. Gelfand, D.A. Raikov, and G.E. Shilov, {\it Commutative
Normed Rings}, Chelsea Publ., New York, 1964; also: Fizmatgiz, Moscow, 1960
[Russian].
\bibitem{GF} I.C. Gohberg and I.A. Feldman, {\it Convolution Equations and
Projection Methods for their Solution}, Transl. Math. Monographs {\bf 41},
Amer. Math. Soc., Providence, RI, 1971; also: Nauka, Moscow, 1971 [Russian].
\bibitem{Kn} E. Kaniuth, {\it A Course in Commutative Banach Algebras},
Graduate Texts in Mathematics {\bf 246}, Springer, New York and Berlin, 2009.
\bibitem{Kp2} D.J. Kaup, {\it On the inverse scattering problem for cubic
eigenvalue problems of class} $\psi_{xxx}+6Q\psi_x+6R_\psi=\zl\psi$, Stud. Appl.
Math. {\bf 62}(3), 189--216 (1980).
\bibitem{Kr} M.G. Krein, {\it Integral equations on the half-line with a kernel
depending on the difference of arguments}, Uspehi Mat. Nauk {\bf 13}(5), 3--120
(1958) [Russian].
\bibitem{Ln} D. Lannes, {\it The Water Waves Problem\/}: {\it Mathematical
Analysis and Asymptotics}, Math. Surveys and Monographs {\bf 188}, Amer. Math.
Soc., Providence, RI, 2013.
\bibitem{Geng3} Ruomeng Li and Xianguo Geng, {\it A matrix Yajima-Oikawa
long-wave-short-wave resonance equation, Darboux transformations and rogue wave
solutions}, Commun. Nonlinear Sci. Numer. Simulat. {\bf 90}, 105408 (2020).
\bibitem{Ma} Yan-Chow Ma, {\it The complete solution of the long-wave-short-wave
resonance equations}, Stud. Appl. Math. {\bf 59}(3), 201--221 (1978).
\bibitem{MT} I. Madsen and J. Tornehave, {\it From Calculus to Cohomology},
Cambridge University Press, Cambridge, 1997.
\bibitem{McK} H.P. McKean, {\it Boussinesq's equation as a Hamiltonian system}.
In: I. Gohberg and M. Kac, {\it Topics in Functional Analysis. Essays Dedicated
to M.G. Krein on the Occasion of his 70th Birthday}, Academic Press, New York,
1978, pp.~217--226.
\bibitem{Nw} A.C. Newell, {\it Long waves--short waves: A solvable model}, SIAM
J. Appl. Math. {\bf 35}(4), 650--664 (1978).
\bibitem{PAB} B. Prinari, M.J. Ablowitz, and G. Biondini, {\it Inverse
scattering transform for the vector nonlinear Schr\"o\-din\-ger equation with
nonvanishing boundary conditions}, J. Math. Phys. {\bf 47}, 063508 (2016),
pp.~33.
\bibitem{WCGL} Kedong Wang, Mingming Chen, Xianguo Geng, and Ruomeng Li,
{\it A vector super Newell long-wave-short-wave equation and infinite
conservation laws}, Partial Differential Equations in Applied Mathematics
{\bf 5}, 100206 (2022).
\bibitem{Wh} G.B. Whitham, {\it Non-linear dispersion of water waves}, J. Fluid
Mech. {\bf 27}(2), 399--412 (1966).
\bibitem{Wr} O.C. Wright III, {\it Homoclinic connections of unstable plane
waves of the long-wave-short-wave equations}, Stud. Appl. Math. {\bf 117},
71--93 (2006).
\bibitem{YO} N. Yajima and M. Oikawa, {\it Formation and interaction of
Sonic-Langmuir solitons}, Prog. Theor. Phys. {\bf 56}(6), 1719--1739 (1976).
\bibitem{ZS} V.E. Zakharov and A.B. Shabat, {\it Exact theory of
two-dimensional self-focusing and one dimensional self-modulation of waves in
nonlinear media}, Sov. Phys. JETP {\bf 34}, 62--69 (1972); also: Z. Eksper.
Teoret. Fiz. {\bf 61}(1), 118--134 (1971) [Russian].
\end{thebibliography}
\end{document}